\newtheorem{theorem}{Theorem}[section]
\newtheorem{lemma}[theorem]{Lemma}
\def\diag{\mathrm{diag}}
\DeclareRobustCommand*\uell{\mathpalette\@uell\relax}
\newcommand*\@uell[2]{
  \setbox0=\hbox{$#1\ell$}
  \setbox1=\hbox{\rotatebox{10}{$#1\ell$}}
  \dimen0=\wd0 \advance\dimen0 by -\wd1 \divide\dimen0 by 2
  \mathord{\lower 0.1ex \hbox{\kern\dimen0\unhbox1\kern\dimen0}}
}
\begin{document}
\title{The $\hat G$-Index of a Spin, Closed, Hyperbolic Manifold of Dimension 2 or 4}

\author{John G. Ratcliffe and Steven T. Tschantz}

\address{Department of Mathematics, Vanderbilt University\\ 
Nashville, TN 37240, USA\\ }
\email{j.g.ratcliffe@vanderbilt.edu}

\begin{abstract}
In this paper, we develop general techniques for computing the Atiyah-Singer $\hat G$-index of a spin, closed, hyperbolic 2- or 4-manifold, 
and apply these techniques to compute the $\hat G$-index of the fully symmetric spin structure of the Davis hyperbolic 4-manifold. 
\end{abstract}

\subjclass{57M50, 58J20, 53C27}

\keywords{hyperbolic $4$-manifold,  harmonic spinor, Davis manifold}

\maketitle

\section{Introduction}\label{S:intro} 

Let $M = \Gamma\backslash H^n$ be a spin, closed, hyperbolic $n$-manifold. 
This means that the discrete subgroup $\Gamma$ of ${\rm SO}^+(n,1)$ lifts to a subgroup $\hat\Gamma$ of ${\rm Spin}^+(n,1)$ such that the 
double-covering epimorphism $\eta: {\rm Spin}^+(n,1) \to {\rm SO}^+(n,1)$ maps $\hat\Gamma$ isometrically onto $\Gamma$, 
thus giving a spin structure  $\hat\Gamma\backslash {\rm Spin}^+(n,1)$ on $M$ by Theorem 2.1 of \cite{RRT}.

Let $N(\Gamma)$ be the normalizer of $\Gamma$ in ${\rm SO}^+(n,1)$. 
The group of orientation-preserving isometries of $M$ is represented by the quotient group $\Gamma\backslash N(\Gamma)$, that is, 
if $f$ is in $N(\Gamma)$, then $f_\star: M \to M$, defined by $f_\star(\Gamma x) = \Gamma fx$, is an orientation-preserving isometry of $M$,  
and every orientation-preserving isometry of $M$ is of this form. 

Likewise, the group of symmetries of the spin structure $\hat\Gamma\backslash {\rm Spin}^+(n,1)$ is represented by the quotient group 
$\hat\Gamma\backslash N(\hat\Gamma)$ where $N(\hat\Gamma)$ is the normalizer of $\hat\Gamma$ in $ {\rm Spin}^+(n,1)$. 
If $\hat f$ in $N(\hat\Gamma)$, then $\hat f$ induces a symmetry $\hat f_\star$ of the spin structure 
$\hat\Gamma\backslash {\rm Spin}^+(n,1)$ defined by $\hat f_\star(\hat\Gamma \hat g) = \hat\Gamma(\hat f\hat g)$, 
and every symmetry of $\hat\Gamma\backslash {\rm Spin}^+(n,1)$ is of this form by Theorem 2.2 of \cite{RRT}.

Let $\hat G$ be the group of symmetries of the spin structure $\hat\Gamma\backslash {\rm Spin}^+(n,1)$ of $M$. 
Then the double-covering epimorphism  $\eta: {\rm Spin}^+(n,1) \to {\rm SO}^+(n,1)$ induces a double-covering epimorphism 
from $\hat G$ to the group $G$ of all the orientation-preserving isometries of $M$ that lift to a symmetry of the spin structure $\hat\Gamma\backslash {\rm Spin}^+(n,1)$. 
We say that the spin structure $\hat\Gamma\backslash {\rm Spin}^+(n,1)$ is {\it fully symmetric} if $G$ is the full group of orientation-preserving 
isometries of $M$. 

Assume that $n$ is even, then $\hat G$ acts on the finite-dimensional, complex, vector spaces $\mathcal H^+$ and $\mathcal H^-$ of positive and negative harmonic spinors on $M$, cf. \cite{hitchin:spinors, L-M}. This gives two representations $\rho^+$ and $\rho^-$ of the finite group $\hat G$ whose difference $\rho^+ - \rho^-$ in the representation ring $R(\hat G)$ is the $\hat G$-index, ${\rm Spin}(\hat G, M)$, of the action of $\hat G$ on 
the space $\mathcal H = \mathcal H^+\oplus\mathcal H^-$ of harmonic spinors of the spin structure of $M$.

The Atiyah-Singer $G$-Spin theorem \cite{A-H,atiyah-singer:III, L-M,S} gives a formula for ${\rm Spin}(\hat G, M)$ 
which unfortunately involves indeterminate sign terms. 
In our previous paper \cite{RRT}, we developed techniques to determine some of these sign terms but not all of them. 
In this paper, we develop general techniques for computing ${\rm Spin}(\hat G, M)$ for $n = 2, 4$, 
and we apply our new techniques to compute the $\hat G$-index of the fully symmetric spin structure 
of the Davis hyperbolic 4-manifold $M$. 

Our paper is organized as follows: 
In Section 2, we describe the representation of ${\rm Spin}^+(4,1)$ by the matrix group ${\rm SU}(1,1;\mathbb H)$. 
In Section 3, we show that the Davis hyperbolic 4-manifold has a unique fully symmetric spin structure 
whose group of symmetries $\hat G$ has order $28,800$. 
In Section 4, we determine the structure of the group $\hat G$ in terms of the binary icosahedral group $2I$. 
In Section 5, we determine all the irreducible representations of $\hat G$ in terms of the irreducible representations 
of $2I$. 
In Section 6, we prove some results concerning totally geodesic surfaces in an  orientable, closed, hyperbolic 4-manifold. 
In Section 7, we develop our general techniques for computing the $\hat G$-index of a spin, closed, hyperbolic 4-manifold. 
In Section 8, we compute the $\hat G$-index of the fully symmetric spin structure of the Davis hyperbolic 4-manifold $M$. 
In Section 9, we show that ${\rm Spin}(\hat G,M)$ is the difference of two 12-dimensional irreducible representations of $\hat G$  in $R(\hat G)$.
In Section 10, we develop our techniques for computing the $\hat G$-index of a spin, closed, hyperbolic 2-manifold.

\section{The Representation of ${\rm Spin}^+(4,1)$ by ${\rm SU}(1,1;\mathbb{H})$}\label{S:2}  

Let $\mathbb{H}$ be the ring of quaternions, and let $\mathbb H(2)$ be the algebra of $2\times 2$ matrices over $\mathbb H$. 
If $A$ is in $\mathbb H(2)$, let $A^*$ be the conjugate transpose of $A$.  Let $J = {\rm diag}(1,-1)$, and let 
$${\rm SU}(1,1;\mathbb H) = \{A \in \mathbb H(2): A^*JA = J\}.$$
The group ${\rm SU}(1,1;\mathbb H)$ acts on the conformal ball model of hyperbolic 4-space
$$B^4 =\{q \in \mathbb H: |q| < 1\}$$
by linear fractional transformations so that 
$$\left(\begin{array}{cc} a & b \\ c & d\end{array}\right)\cdot q = (aq+b)(cq +d)^{-1}.$$

In \S6 of our paper \cite{RRT}, we defined a double-covering epimorphism 
$$\eta: {\rm SU}(1,1;\mathbb H) \to {\rm SO}^+(4,1).$$
We now make a small change in the definition of $\eta$ for an aesthetic reason. 
We change the definition of $E_5$ from $J$ to $-J$, and we replace $\rho^+$ by $\rho^-$ throughout \S6 of \cite{RRT}. 
The new $\eta$ is conjugation by $J$ followed by the old $\eta$.
All the main results of \S6 and \S10 of \cite{RRT} are unaltered by this change. 
The only consequence of this change to \cite{RRT} is that the off-diagonal entries of a lift $\hat M$ in ${\rm SU}(1,1;\mathbb H)$ 
of a matrix $M$ in ${\rm SO}^+(4,1)$, with respect to $\eta$, change sign. 

With this change, $\eta$ is now compatible with stereographic projection $\zeta: B^4 \to H^4$ (cf. Formula 4.5.2 \cite{R}), that is, for all $A \in {\rm SU}(1,1;\mathbb H)$ and all $q \in B^4$, we have 
$$\zeta(A\cdot q) = \eta(A)\zeta(q).$$
It is now obvious that ${\rm SU}(1,1;\mathbb H)$ acts on $B^4$ via orientation-preserving isometries, since $\zeta: B^4 \to H^4$ is an isometry. 

If $q \in \mathbb{H}$, write $q = q_0 +q_1{\bf i} + q_2{\bf j} + q_3{\bf k}$ with $q_i$ real for $i=0,1,2,3$. 
If $A$ in ${\rm SU}(1,1;\mathbb{H})$ has row vectors $(a,b)$ and $(c,d)$,
then the new definition of $\eta$ is given by $\eta(A) = (m_{ij})$ where $m_{ij}$ is listed in Table \ref{ta:1}. 
The last row and last column of $\eta(A)$ in Table \ref{ta:1} appear to be unbalanced with respect to the entries of $A$; however, 
this is not the case by Lemma \ref{L:2.1}.

\begin{table} 
\begin{eqnarray*}
m_{11} &= & b_0c_0+b_1c_1+b_2c_2+b_3c_3+a_0d_0+a_1d_1+a_2d_2+a_3d_3 \\
m_{12} & = & b_1c_0-b_0c_1-b_3c_2+b_2c_3-a_1d_0+a_0d_1+a_3d_2-a_2d_3 \\
m_{13} & = & b_2c_0+b_3c_1-b_0c_2-b_1c_3-a_2d_0-a_3d_1+a_0d_2+a_1d_3 \\
m_{14} & = & b_3c_0-b_2c_1+b_1c_2-b_0c_3-a_3d_0+a_2d_1-a_1d_2+a_0d_3 \\
m_{15} & = & 2b_0d_0+2b_1d_1+2b_2d_2+2b_3d_3 \\
m_{21} & = & b_1c_0-b_0c_1+b_3c_2-b_2c_3+a_1d_0-a_0d_1+a_3d_2-a_2d_3 \\
m_{22} & = & -b_0c_0-b_1c_1+b_2c_2+b_3c_3+a_0d_0+a_1d_1-a_2d_2-a_3d_3 \\
m_{23} & = & b_3c_0-b_2c_1-b_1c_2+b_0c_3-a_3d_0+a_2d_1+a_1d_2-a_0d_3 \\
m_{24} & = & -b_2c_0-b_3c_1-b_0c_2-b_1c_3+a_2d_0+a_3d_1+a_0d_2+a_1d_3 \\
m_{25} & = & 2b_1d_0-2b_0d_1+2b_3d_2-2b_2d_3 \\
m_{31} & = & b_2c_0-b_3c_1-b_0c_2+b_1c_3+a_2d_0-a_3d_1-a_0d_2+a_1d_3 \\
m_{32} & = & -b_3c_0-b_2c_1-b_1c_2-b_0c_3+a_3d_0+a_2d_1+a_1d_2+a_0d_3 \\
m_{33} & = & -b_0c_0+b_1c_1-b_2c_2+b_3c_3+a_0d_0-a_1d_1+a_2d_2-a_3d_3 \\
m_{34} & = & b_1c_0+b_0c_1-b_3c_2-b_2c_3-a_1d_0-a_0d_1+a_3d_2+a_2d_3 \\
m_{35} & = & 2b_2d_0-2b_3d_1-2b_0d_2+2b_1d_3 \\
m_{41} & = & b_3c_0+b_2c_1-b_1c_2-b_0c_3+a_3d_0+a_2d_1-a_1d_2-a_0d_3 \\
m_{42} & = & b_2c_0-b_3c_1+b_0c_2-b_1c_3-a_2d_0+a_3d_1-a_0d_2+a_1d_3 \\
m_{43} & = & -b_1c_0-b_0c_1-b_3c_2-b_2c_3+a_1d_0+a_0d_1+a_3d_2+a_2d_3 \\
m_{44} & = & -b_0c_0+b_1c_1+b_2c_2-b_3c_3+a_0d_0-a_1d_1-a_2d_2+a_3d_3 \\
m_{45} & = & 2b_3d_0+2b_2d_1-2b_1d_2-2b_0d_3 \\
m_{51} & = & 2a_0b_0+2a_1b_1+2a_2b_2+2a_3b_3 \\
m_{52} & = & -2a_1b_0+2a_0b_1+2a_3b_2-2a_2b_3 \\
m_{53} & = & -2a_2b_0-2a_3b_1+2a_0b_2+2a_1b_3 \\
m_{54} & = & -2a_3b_0+2a_2b_1-2a_1b_2+2a_0b_3 \\
m_{55} & = & a_0^2+a_1^2+a_2^2+a_3^2+b_0^2+b_1^2+b_2^2+b_3^2.
\end{eqnarray*}

\vspace{.15in}
\caption{The double-covering epimorphism $\eta:{\rm SU(1,1;\mathbb{H}) \to  {\rm SO}^+(4,1)}$}\label{ta:1}
\end{table}

\begin{lemma}\label{L:2.1} 
Let $A$ be a matrix in ${\rm SU}(1,1;\mathbb H)$ with row vectors $(a,b)$ and $(c,d)$, then $|a| = |d|$, $|b| = |c|$, $\overline b a = \overline d c$, 
and $c \overline a = d\overline b$. 
\end{lemma}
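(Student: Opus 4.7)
The plan is to read off all the identities directly from the defining relation $A^*JA = J$ together with its companion relation $AJA^* = J$, which follows automatically from the first because $J^2 = I$ forces $A$ to be invertible with $A^{-1} = JA^*J$.

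First I would write $A = \left(\begin{smallmatrix} a & b \\ c & d\end{smallmatrix}\right)$ and compute the block entries of $A^*JA$. Expanding and comparing with $J = \mathrm{diag}(1,-1)$ yields the four scalar identities
\begin{equation*}
|a|^2 - |c|^2 = 1, \qquad |d|^2 - |b|^2 = 1, \qquad \overline{a}b = \overline{c}d, \qquad \overline{b}a = \overline{d}c.
\end{equation*}
The third identity above is the first of the off-diagonal conclusions of the lemma, namely $\overline{b}a = \overline{d}c$.

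Next I would observe that $A^*JA = J$ together with $J^2 = I$ gives $A^{-1} = JA^*J$, and hence also $AJA^* = J$. Computing the block entries of $AJA^*$ and comparing with $J$ produces the dual set of identities
\begin{equation*}
|a|^2 - |b|^2 = 1, \qquad |d|^2 - |c|^2 = 1, \qquad a\overline{c} = b\overline{d}, \qquad c\overline{a} = d\overline{b}.
\end{equation*}
The last of these is the second off-diagonal conclusion of the lemma. Combining the norm identities from the two sets gives $|b|^2 = |c|^2$ and $|a|^2 = |d|^2$, hence $|a| = |d|$ and $|b| = |c|$.

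There is no real obstacle here: the argument is a direct block multiplication, and the only mildly non-routine point is invoking $AJA^* = J$ rather than working only with $A^*JA = J$, since the latter by itself does not yield $|a| = |d|$ or $c\overline{a} = d\overline{b}$.
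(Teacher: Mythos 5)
Your proof is correct and takes essentially the same approach as the paper: both derive the first batch of identities from $A^*JA = J$ (the paper cites Lemma 6.3 of \cite{RRT} rather than computing blocks directly) and the second batch by exploiting the inverse, since your computation of $AJA^*=J$ is precisely what the paper does implicitly by applying that same lemma to $A^{-1} = JA^*J$. The only difference is that you spell out the block multiplications explicitly rather than citing the earlier result.
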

\begin{proof}
By Lemma 6.3 of \cite{RRT}, we have $|a|^2-|c|^2 = 1 = |d|^2 - |b|^2$, and $\overline b a = \overline d c$.
The inverse of $A$ has row vectors $(\overline a, -\overline c)$ and $(-\overline b,\overline d)$, and so 
we have $|a|^2 -|b|^2 = 1$ and $c \overline a = d\overline b$.  Hence $|b|^2 = |c|^2$ and $|a|^2 = |d|^2$. 
\end{proof}

To prove that $\zeta (A \cdot q) = \eta(A)\zeta(q)$, we verified that $A\cdot q = \zeta^{-1}(\eta(A)\zeta(q))$. 
For example, for $q = 0$, we have
$A \cdot 0  = bd^{-1} = b\overline d/|d|^2.$
By Lemma 6.3 of \cite{RRT} and Lemma \ref{L:2.1}, we have $1+ |a|^2+|b|^2 = 2|d|^2.$
By Formula 4.5.3 of \cite{R}, we have
$$\zeta^{-1}(\eta(A)\zeta(0)) =  \zeta^{-1}(\eta(A)e_5) = \zeta^{-1}(m_{15},\ldots,m_{55}) =  b\overline d/|d|^2.$$
We leave the proof for $q \neq 0$ as an exercise for the reader. 

\section{The Fully Symmetric Spin Structure on the Davis 4-Manifold}\label{S:3}  

The Davis hyperbolic 4-manifold $M$, first defined in \cite{davis:hyperbolic}, 
is obtained by gluing the opposite sides of a regular hyperbolic 120-cell $\mathcal C$, with dihedral angle $2\pi/5$, 
by translations with axes through the center of $\mathcal C$. The 120-cell $\mathcal C$ is subdivided by barycentric subdivision into 14,400 copies of the $(5,3,3,5)$ hyperbolic Coxeter 4-simplex $\Delta$, and so $\Delta$ is the basic building block of $M$. 
The Coxeter diagram of $\Delta$ reads the same forwards as backwards, and so $\Delta$ has an isometric involution $\sigma$, which is a rotation of $180$ degrees that fixes a 2-dimensional plane of $H^4$.  

In our paper \cite{ratcliffe-tschantz:davis}, 
we proved that $\sigma$ induces an orientation-preserving isometric involution $\sigma_\star$ of $M = \Gamma\backslash H^4$ by showing that $\sigma$ normalizes $\Gamma$. 
The isometry $\sigma_\star$ is then well defined by the formula 
$\sigma_\star(\Gamma x) = \Gamma\sigma x$ for each $x$ in $H^4$. 
We call $\sigma_\star$ the ``inside-out" isometry of $M$ because $\sigma_\star$ interchanges the two points $C$ and $A$ of $M$ represented by the center of  the 120-cell fundamental domain $\mathcal C$ of $\Gamma$ and the cycle of the vertices of $P$. 
This pair of points of $M$ is canonical, since every isometry of $M$ either fixes the points $C$ and $A$ or interchanges them. 

If $\phi$ is a symmetry of $\mathcal C$, then $\phi$ induces an isometry $\phi_\star$ of $M$ defined by $\phi_\star(\Gamma x) =\Gamma \phi x$ for each $x$ in $H^4$. 
The next theorem follows directly from Theorem 3 of \cite{ratcliffe-tschantz:davis}. 

\begin{theorem}\label{T:3.1} 
Let $G$ be the group of orientation-preserving isometries of the Davis $4$-manifold $M$, and let
${\rm Sym}(\mathcal C)_0$ be the group of orientation-preserving symmetries of the regular $120$-cell\, $\mathcal{C}$, of order $7200$.  
Then we have a split short exact sequence
$$1 \to {\rm Sym}_0(\mathcal C) \to G \to \langle \sigma_\star \rangle \to 1$$
where the injection is defined by $\phi \mapsto \phi_\star$, and $\sigma_\star$ maps to itself by the projection. 
Therefore, the order of $G$ is $14,400$.
\end{theorem}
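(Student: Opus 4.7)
The plan is to derive the stated split short exact sequence as a repackaging of Theorem 3 of \cite{ratcliffe-tschantz:davis}, using the canonical pair $\{C,A\}$ and the inside-out involution $\sigma_\star$ introduced above. The action of $G$ on $\{C,A\}$ supplies a natural homomorphism $\pi\colon G \to \mathbb{Z}/2$ whose kernel consists of those orientation-preserving isometries of $M$ that fix both $C$ and $A$; the splitting will be furnished by $\sigma_\star$, which has order $2$ and swaps $C$ and $A$.

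First I would verify that $\phi \mapsto \phi_\star$ defines an injective homomorphism $\iota\colon \mathrm{Sym}_0(\mathcal C) \to G$. Every orientation-preserving symmetry $\phi$ of $\mathcal C$ permutes the face-pairing translations defining $\Gamma$, so it normalizes $\Gamma$, and $\phi_\star$ is therefore a well-defined orientation-preserving isometry of $M$. For injectivity, $\phi_\star = \mathrm{id}_M$ forces $\phi$ on the interior of $\mathcal C$ to agree with an element of $\Gamma$, but only the identity of $\Gamma$ fixes any interior point of the fundamental domain, so $\phi = \mathrm{id}$. The image of $\iota$ lies in $\ker\pi$, because each $\phi \in \mathrm{Sym}_0(\mathcal C)$ fixes the center of $\mathcal C$ and hence fixes $C$.

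The heart of the argument is the reverse inclusion $\ker\pi \subseteq \iota(\mathrm{Sym}_0(\mathcal C))$. Given $g \in \ker\pi$, I would lift $g$ to $\tilde g \in N(\Gamma) \subset \mathrm{SO}^+(4,1)$; using $g(C) = C$ and composing $\tilde g$ with a suitable element of $\Gamma$, one may arrange that $\tilde g$ fixes the center of $\mathcal C$. Since $\tilde g$ normalizes $\Gamma$ and therefore permutes the $\Gamma$-translates of $\mathcal C$, and since $\mathcal C$ is the unique $\Gamma$-tile whose interior contains its own center, $\tilde g$ must stabilize $\mathcal C$ setwise, and so restricts to an element of $\mathrm{Sym}_0(\mathcal C)$. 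This reduction is essentially the substance of Theorem 3 of \cite{ratcliffe-tschantz:davis}, and verifying that the lift can be normalized in this way is the main technical obstacle.

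Finally, $\sigma_\star$ lies outside $\ker\pi$ by the very definition of the inside-out involution, and it has order $2$ in $G$ because $\sigma$ is an involution of $H^4$ not contained in $\Gamma$. Hence $\pi$ is surjective and the map $\langle \sigma_\star \rangle \to G$ sending $\sigma_\star$ to itself splits $\pi$. The order count then follows from the classical fact that $|\mathrm{Sym}_0(\mathcal C)| = 7200$ for the regular $120$-cell, giving $|G| = 2 \cdot 7200 = 14{,}400$.
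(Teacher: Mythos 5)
The paper offers no proof of its own here: it simply states that the result ``follows directly from Theorem~3 of \cite{ratcliffe-tschantz:davis},'' so what you have done is reconstruct the argument behind that citation rather than rederive something the paper spells out. Your reconstruction is structurally sound — defining $\pi: G \to \mathbb{Z}/2$ via the action on $\{C,A\}$, identifying $\ker\pi$ with $\iota(\mathrm{Sym}_0(\mathcal C))$, and using $\sigma_\star$ as the splitting — and it matches the statement you were asked to prove.

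One step, however, is stated in a way that is circular as written. You say that because $\tilde g$ normalizes $\Gamma$ it ``therefore permutes the $\Gamma$-translates of $\mathcal C$.'' That inference does not follow from normalization alone: $\tilde g(\gamma\mathcal C) = (\tilde g\gamma\tilde g^{-1})\,\tilde g(\mathcal C)$, so $\tilde g$ permutes the tiles precisely when $\tilde g(\mathcal C)$ is itself a tile, which is exactly what you are trying to establish. To close the gap you need to invoke the canonical structure of the tiling: $\mathcal C$ is the Dirichlet domain for $\Gamma$ centered at the center of $\mathcal C$, so once you have arranged that $\tilde g$ fixes that center, $\tilde g(\mathcal C)$ is the Dirichlet domain for $\tilde g\Gamma\tilde g^{-1} = \Gamma$ at the same point, hence equals $\mathcal C$. (Equivalently, one can use the fact, stated later in the paper, that $\{\gamma\mathcal C\}$ is the regular $(5,3,3,5)$ honeycomb, whose full symmetry group is the Coxeter group $W \supset \Gamma$, and show the normalized lift lies in $W$.) With that repair your argument goes through, and everything else — injectivity of $\iota$, the image lying in $\ker\pi$, surjectivity of $\pi$ via $\sigma_\star$, and the order count $2\cdot 7200 = 14{,}400$ — is correct.
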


\begin{theorem}\label{T:3.2} 
The Davis hyperbolic 4-manifold $M = \Gamma\backslash H^4$ has a unique fully symmetric spin structure. 
\end{theorem}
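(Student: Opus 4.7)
The plan is to recast the statement in terms of fixed points of the $G$-action on the torsor of spin structures. Since $M$ is known to be spin (from the authors' earlier work), the set $\mathcal{S}(M)$ of spin structures on $M$ is a nonempty torsor over $H^1(M;\mathbb{Z}/2) = \mathrm{Hom}(\Gamma,\{\pm 1\})$: any two lifts $\hat\Gamma$ and $\hat\Gamma'$ of $\Gamma$ to $\mathrm{SU}(1,1;\mathbb{H})$ satisfy $\hat\gamma' = \epsilon(\gamma)\hat\gamma$ for a unique homomorphism $\epsilon \in \mathrm{Hom}(\Gamma,\{\pm 1\})$. The orientation-preserving isometry group $G$ from Theorem \ref{T:3.1} acts naturally on $\mathcal{S}(M)$, and a fully symmetric spin structure is by definition a $G$-fixed point of this action. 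It therefore suffices to prove existence and uniqueness of such a fixed point.

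For existence, I would construct a $G$-fixed lift $\hat\Gamma$ directly in $\mathrm{SU}(1,1;\mathbb{H})$. Every side-pairing generator $\gamma$ of $\Gamma$ is a translation along an axis through the center of the 120-cell $\mathcal{C}$, so has exactly two preimages $\pm \hat\gamma$ in $\mathrm{SU}(1,1;\mathbb{H})$ under $\eta$; one selects a canonical sign for each (for instance, the lift with $d_0 \ge 0$ in the notation of Table \ref{ta:1}, together with a deterministic tie-breaking rule) and lets $\hat\Gamma$ be the resulting subgroup. Using the formulas of Table \ref{ta:1}, Lemma \ref{L:2.1}, and Theorem 2.2 of \cite{RRT}, one then checks that $\hat\Gamma$ indeed maps isomorphically onto $\Gamma$ and is normalized by suitable lifts of the generators of $G$ provided by Theorem \ref{T:3.1}, namely the orientation-preserving symmetries of $\mathcal{C}$ together with the inside-out involution $\sigma_\star$.

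For uniqueness, suppose $\hat\Gamma$ and $\hat\Gamma'$ are both $G$-fixed, and let $\epsilon$ be the homomorphism relating them. Then $\epsilon$ must be invariant under the natural action of $G$ on $\mathrm{Hom}(\Gamma,\{\pm 1\})$, so it suffices to prove that this action has no nonzero invariants. The abelianization of $\Gamma$ is generated, modulo the face-cycle relations coming from the codimension-two faces of $\mathcal{C}$, by the classes of the $60$ side-pairing transformations, and the action of $G$ is simply the combined permutation action of $\mathrm{Sym}_0(\mathcal{C})$ on these pairs together with the involution induced by $\sigma_\star$. One then completes the argument by enumerating $G$-orbits on face pairs and showing that no nonzero $\mathbb{Z}/2$-valued functional on $\Gamma^{\mathrm{ab}}$ is $G$-invariant.

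The main obstacle is the uniqueness step, which demands a careful accounting of both the $G$-orbit structure on the $60$ face pairs of $\mathcal{C}$ and the face-cycle relations in $\Gamma^{\mathrm{ab}}$; these combine non-trivially, since some nonzero linear combinations of face-pair generators are killed by the relations, and one needs to confirm that no such combination is simultaneously $G$-invariant. Existence, by contrast, reduces to a finite matrix verification in $\mathrm{SU}(1,1;\mathbb{H})$ once canonical lifts have been fixed.
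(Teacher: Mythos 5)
Your torsor reformulation is conceptually correct: since $M$ is spin, the set of spin structures is a nonempty torsor over $H^1(M;\mathbb{Z}/2)\cong\Hom(\Gamma,\{\pm 1\})$ on which $G$ acts, and a fully symmetric spin structure is precisely a $G$-fixed point. Uniqueness would then follow from $\Hom(\Gamma,\{\pm 1\})^G=0$. However, this is not the route the paper takes, and your existence argument contains a genuine gap.

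The gap in existence: an arbitrary sign convention such as ``the lift with $d_0\ge 0$'' has no reason to produce a subgroup $\hat\Gamma$ that projects isomorphically onto $\Gamma$. The chosen lifts of the $120$ side-pairing generators generate some subgroup of $\mathrm{SU}(1,1;\mathbb{H})$, but if even one face-cycle relation $g_{i_1}\cdots g_{i_k}=1$ in $\Gamma$ lifts to $\hat g_{i_1}\cdots\hat g_{i_k}=-1$, that subgroup contains $-1$ and is not a spin structure at all. Consistency of the signs with the relations is exactly the nontrivial content of constructing $\hat\Gamma$, and ``$d_0\ge 0$ plus tie-breaking'' does not address it. Nor is there any reason such an \emph{ad hoc} convention would be $G$-equivariant: $G$ acts on the generators through conjugation in $\mathrm{SO}^+(4,1)$, which scrambles the individual matrix entries of the lifts in ways that need not respect the sign of $d_0$. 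The paper sidesteps all of this by invoking the construction already carried out in \cite{RRT}: there is a \emph{unique} lift $\hat\Gamma$ normalized by $\widehat{\mathrm{Sym}}(\mathcal C)_0$, and the lifted generators $\hat g_1,\dots,\hat g_{120}$ are chosen there so that $\widehat{\mathrm{Sym}}(\mathcal C)_0$ permutes them by conjugation. The new content of the paper's proof is then only the verification that the explicit lift $\hat\sigma$ (whose matrix is written down) also normalizes this $\hat\Gamma$, checked by confirming that $\hat\sigma\hat g_i\hat\sigma^{-1}$ equals the same word $w_i$ in the $\hat g_j$ as $\sigma g_i\sigma^{-1}$ does in the $g_j$.

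On uniqueness: your plan to show $\Hom(\Gamma,\{\pm 1\})^G=0$ by analyzing the $G$-action on the $60$ face pairs modulo the face-cycle relations is a legitimate strategy, but it is a substantial computation that you acknowledge you have not done. The paper instead gets uniqueness for free from \cite{RRT}: there is a \emph{unique} $\hat\Gamma$ normalized even by the smaller group $\widehat{\mathrm{Sym}}(\mathcal C)_0$, so a fortiori at most one normalized by all of $\hat G$. In other words, the paper uses the stronger statement $\Hom(\Gamma,\{\pm 1\})^{\mathrm{Sym}(\mathcal C)_0}=0$, already established, rather than proving the weaker $\Hom(\Gamma,\{\pm 1\})^{G}=0$ from scratch.
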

\begin{proof}
In our paper \cite{RRT}, we described a unique lift of $\Gamma$ to a subgroup $\hat\Gamma$ of ${\rm Spin}^+(4,1)$ 
such that $\widehat{\rm Sym}(\mathcal{C})_0 = \eta^{-1}({\rm Sym}(\mathcal{C})_0)$ normalizes $\hat\Gamma$.  
We show that the spin structure $\hat\Gamma\backslash {\rm Spin}^+(4,1)$ of $M$ is fully symmetric 
by showing that $\sigma$ lifts to an element $\hat\sigma$ of ${\rm Spin}^+(4,1)$ that also normalizes $\hat \Gamma$.

Let $\tau = (1+\sqrt{5})/2$ be the golden ratio, and let $\kappa = \sqrt{1+3\tau}$. 
The isometric involution $\sigma$ of $H^4$ is represented by the following matrix in ${\rm SO}^+(4,1)$
$$\left(\begin{array}{ccccc}
-4-7\tau & -1-3\tau & 0 & -1-\tau & (2+3\tau)\kappa \\
-1-3\tau & -1-\tau  & 0 & 0 & (1+\tau)\kappa \\
0 & 0 & 1 & 0 & 0 \\
-1-\tau & 0 & 0 & 0 & \kappa \\
-(2+3\tau)\kappa & -(1+\tau)\kappa & 0 & -\kappa & 5 + 8\tau 
\end{array}\right).$$
The isometric involution $\sigma$ lifts, with respect to $\eta: {\rm SU}(1,1;\mathbb{H}) \to {\rm SO}^+(4,1)$, to 
the following matrix of order 4, 

$$\hat\sigma = \frac{\sqrt{\tau -1}}{2}\left(\begin{array}{cc} 
(1+\tau)\kappa {\bf i} +\kappa{\bf j} -\tau\kappa{\bf k} & \tau -(1+3\tau){\bf i} +(1+2\tau){\bf k} \\
\tau +(1+3\tau){\bf i} -(1+2\tau){\bf k} & -(1+\tau)\kappa{\bf i} +\kappa{\bf j} + \tau\kappa{\bf k} \end{array}\right).$$

In our paper \cite{ratcliffe-tschantz:davis}, we give 120 generators $g_1,\ldots, g_{120}$ 
of the Davis manifold group $\Gamma$ that represent the side-pairing maps of the 120-cell fundamental domain $\mathcal C$ of $\Gamma$.  In \cite{ratcliffe-tschantz:davis}, we give an indirect proof that $\sigma$ normalizes $\Gamma$. 
To give a direct proof, requires writing $\sigma g_i\sigma^{-1}$ as a word $w_i$ in the generators $g_1,\ldots, g_{120}$ for each $i$. This is not hard to do by a computer calculation. For example,
$$\sigma g_1\sigma^{-1} = g_5g_{84}g_{48}g_{69}g_{39}g_{114}g_6g_{83}g_{51}g_{74}g_{36}g_{117}.$$
In \S 10.3 of \cite{RRT}, we lift the generators $g_1, \ldots, g_{120}$ to generators 
$\hat g_1, \ldots, \hat g_{120}$ of $\hat\Gamma$ so that $\widehat{\rm Sym}(\mathcal C)_0$  
acts transitively on $\hat g_1, \ldots, \hat g_{120}$ by conjugation. 
It turns out that $\hat\sigma \hat g_i \hat\sigma^{-1}$ is equal to the same word $w_i$ in the generators $\hat g_1,\ldots, \hat g_{120}$ of $\hat\Gamma$ for each $i$, and so $\hat \sigma$ also normalizes $\hat\Gamma$. 
Thus, the spin structure $\hat\Gamma\backslash{\rm Spin}^+(4,1)$ is fully symmetric by Theorem \ref{T:3.1}.
\end{proof}

\section{The Structure of the Group $\hat G$}\label{S:4}  

In this section, we describe the structure of the group $\hat G$ of symmetries of the fully symmetric spin structure $\hat\Gamma\backslash {\rm Spin}^+(4,1)$ of the Davis manifold $M$ that we considered in \S \ref{S:3}. 
An element $\hat\Gamma \hat f$ of $\hat\Gamma\backslash N(\hat\Gamma)$ represents the symmetry $\hat f_\star$ 
of $\hat\Gamma\backslash{\rm Spin}^+(4,1)$ defined by $\hat f_\star(\hat\Gamma \hat g) = \hat\Gamma (\hat f\hat g)$. 
The projection of $N(\hat\Gamma)$ onto $N(\Gamma)$ via $\eta$ induces an epimorphism from $\hat G$ to $G$, 
defined by $\hat f_\star \mapsto f_\star$, and 
a nonsplit short exact sequence
$$1\to \{\pm 1\} \to \hat G \to G \to 1.$$
The inclusion of $\widehat{\rm Sym}(\mathcal C)_0$ into $N(\hat\Gamma)$ induces a short exact sequence
$$1 \to \widehat{\rm Sym}(\mathcal C)_0 \to \hat G \to \langle \sigma_\star \rangle \to 1.$$
We have a nonsplit short exact sequence
$$1\to \{\pm 1\} \to \widehat{\rm Sym}(\mathcal C)_0 \to {\rm Sym}(\mathcal C)_0 \to 1.$$
The group ${\rm Sym}(\mathcal C)_0$ is the group of orientation-preserving symmetries 
of the regular 120-cell $\mathcal C$. The group ${\rm Sym}(\mathcal C)_0$ has two isomorphic subgroups $A$ and $B$ 
such that ${\rm Sym}(\mathcal C)_0 = A\times _{\{\pm I\}} B$, 
that is, ${\rm Sym}(\mathcal C)_0 = AB$, every element of $A$ commutes with every element of $B$, and  $A\cap B = \{\pm I\}$. 

We now define $A$ and $B$ geometrically. Let $S$ be side of $\mathcal C$. 
The side $S$ is a regular dodecahedron.  Let $F$ be a pentagonal face of $S$, 
and let $E$ be an edge of $F$. Let $F'$ be the opposite face of $S$, 
and let $S'$ be the adjacent side of $\mathcal C$ such that $S'\cap S = F'$. 
Looking across $F$ through $S$, one sees that $F'$ does not line up with $F$, cf. Figure 10.1.1 of \cite{R}. 
The face $F'$ is a rotated image of $F$ by an angle of $\pi/5$.  Let $E'$ be the edge of $F'$ 
that is the rotated image of $E$ by an angle $\pi/5$ viewed from $S'$.  
The group ${\rm Sym}(\mathcal{C})_0$ acts simply transitively on the set of flags of $\mathcal C$ of the form $(S, F, E)$, 
and so there is a unique element $\phi_F$ of ${\rm Sym}(\mathcal{C})_0$ such that $\phi_F(S,F,E) = (S',F,'E')$. 
The symmetry of $\phi_F$ rotates the dodecahedron $S$ to the adjacent dodecahedron $S'$ by a right-hand twist 
of $\pi/5$ radians as in Figure 10.1.1 of \cite{R} where $F$ is the front face.
The twelve symmetries $\{\phi_F: F\ \hbox{is a face of}\ S\}$ generate $A$ with $\phi_F^{-1} = \phi_{F'}$. 
Each symmetry $\phi_F$ has order 10 with $\phi_F^5 = -I$. 
The group $A$ acts simply transitively on the sides of $\mathcal C$, and so has order 120. 
The group $B$ is defined in the same way by taking left-hand twists. 

The groups $A$ and $B$ are mirror images of each other, in the sense that conjugating ${\rm Sym}(\mathcal C)_0$ by 
an orientation-reversing symmetry of $\mathcal C$ interchanges the subgroups $A$ and $B$. 
Each of the groups $A, B$ acts freely on $\partial \mathcal C$ with orbit space a Poincar\'e homology 3-sphere.

The groups $A, B$ lift isomorphically to subgroups $\hat A, \hat B$ of ${\rm Spin}^+(4,1)$, 
and we have $\widehat{\rm Sym}(\mathcal C)_0 =\hat A\times\hat B$. 
In terms of the representation ${\rm SU}(1,1;\mathbb{H})$ of ${\rm Spin}^+(4,1)$, the group
$\widehat{\rm Sym}(\mathcal C)_0$ is represented by the group of diagonal matrices ${\rm diag}(p,q)$ where 
$p$ and $q$ are in the binary icosahedral group $2I$ consisting of one of the 24 unit quaternions
$$\{\pm 1, \pm {\bf i}, \pm{\bf j}, \pm{\bf k}, (\pm 1\pm{\bf i} \pm{\bf j}\pm {\bf k})/2\}$$
or one of the 96 unit quaternions obtained from $(0\pm{\bf i}\pm \tau{\bf j} \pm \tau^{-1}{\bf k})/2$ 
by an even permutation of all the four coordinates $0,1,\tau,\tau^{-1}$. Note that $\tau^{-1} = \tau - 1$.
When convenient, we shall identify $\widehat{\rm Sym}(\mathcal C)_0 =\hat A\times \hat B$ with $2I\times 2I$. 

\begin{theorem}\label{T:4.1} 
The short exact sequence $1 \to \widehat{\rm Sym}(\mathcal C)_0 \to \hat G \to \langle \sigma_\star \rangle \to 1$ splits. 
The element $\hat\sigma_\star$ of $\hat G$ acts by conjugation on $\widehat{\rm Sym}(\mathcal C)_0 = 2I \times 2I$ 
so that $(p,q)$ maps to $(\alpha^{-1}(q),\alpha(p))$ 
where $\alpha$ is an outer automorphism of the binary icosahedral group $2I$ of order $4$ 
such that $\alpha = \alpha^2\alpha^{-1}$ with $\alpha^2$ an inner automorphism of $2I$. 
\end{theorem}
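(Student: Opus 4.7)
The plan is to identify the automorphism $\bar\alpha$ of $\widehat{\rm Sym}(\mathcal C)_0 = 2I \times 2I$ induced by conjugation by $\hat\sigma_\star$ via explicit quaternionic computation with the matrix $\hat\sigma$ of Theorem~\ref{T:3.2}, and then to read off the splitting from it. For $\mathrm{diag}(p, q) \in \widehat{\rm Sym}(\mathcal C)_0$, I compute $\hat\sigma \cdot \mathrm{diag}(p, q) \cdot \hat\sigma^{-1} \in N(\hat\Gamma)$; although this product is not in general diagonal, the torsion-freeness of $\Gamma$ together with $\eta:\hat\Gamma \to \Gamma$ being an isomorphism forces $\hat\Gamma \cap \widehat{\rm Sym}(\mathcal C)_0 = \{I\}$, so there exist unique $p', q' \in 2I$ and $\hat\gamma \in \hat\Gamma$ with $\hat\sigma\, \mathrm{diag}(p, q)\,\hat\sigma^{-1} = \mathrm{diag}(p', q')\,\hat\gamma$. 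The assignment $(p, q) \mapsto (p', q')$ is the automorphism $\bar\alpha$.

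To see that $\bar\alpha$ takes the cross-diagonal form $(p, q) \mapsto (\alpha^{-1}(q), \alpha(p))$, I combine two observations. Formally, any automorphism of $2I \times 2I$ lies in ${\rm Aut}(2I) \wr \mathbb Z/2$ and so either preserves or swaps the two factors. Geometrically, $\sigma$ interchanges the center $C$ of $\mathcal C$ with the vertex point $A$; since $\hat A$ and $\hat B$ encode the right-hand and left-hand twist structure at $C$, conjugation by $\hat\sigma$ transports the picture to $A$, and the subsequent identification back to $\widehat{\rm Sym}(\mathcal C)_0$ modulo $\hat\Gamma$ swaps the two types of twist. This predicts the swap form, and I extract the bijection $\alpha : 2I \to 2I$ by evaluating the calculation on a set of generators of $\hat A$. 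The homomorphism property of $\alpha$ then follows automatically from the fact that $\bar\alpha$ is an automorphism and the direct product decomposition is preserved up to swap.

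To verify the remaining claims about $\alpha$, I use that ${\rm Out}(2I) \cong \mathbb Z/2$: it suffices to check that $\alpha$ interchanges, say, the two conjugacy classes of elements of order $10$ in $2I$, to conclude $\alpha$ is outer. Evaluating $\alpha^2$ on a small set of generators and recognizing the result as conjugation by a specific element of $2I$ of order $4$ then shows both that $\alpha^2$ is inner and that $\alpha$ itself has order $4$ in ${\rm Aut}(2I)$. Note that $\hat\sigma^2 = -I$ is central, which forces $\bar\alpha^2 = \mathrm{id}$ on $\widehat{\rm Sym}(\mathcal C)_0$; in coordinates this reads $(p, q) \mapsto (\alpha^{-1}\alpha(p), \alpha\alpha^{-1}(q)) = (p, q)$, a tautology that is automatic from the swap form and so does not by itself pin down $\alpha^2$ as a map on $2I$.

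For the splitting, I seek $\hat\phi = (p, q) \in \widehat{\rm Sym}(\mathcal C)_0$ with $(\hat\phi\,\hat\sigma_\star)^2 = 1$ in $\hat G$. Since $\hat\sigma_\star^2 = (-I)_\star$, this unfolds to
\[
\hat\phi \cdot \bar\alpha(\hat\phi) \cdot (-I)_\star = 1, \quad\text{i.e.}\quad (p\,\alpha^{-1}(q),\, q\,\alpha(p)) = (-1, -1) \in 2I \times 2I,
\]
and using $\alpha(-1) = -1$ (since $-1$ is the unique element of order $2$ in $2I$) both components reduce to the single relation $q = -\alpha(p)^{-1}$. Taking $p = 1,\, q = -1$ yields a lift of $\sigma_\star$ of order $2$ in $\hat G$, which splits the sequence. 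The principal obstacle in this plan is the matrix step itself: identifying the correction $\hat\gamma \in \hat\Gamma$ in $\hat\sigma\,\mathrm{diag}(p, q)\,\hat\sigma^{-1} = \mathrm{diag}(p', q')\,\hat\gamma$ requires writing $\mathrm{diag}(p', q')^{-1}\hat\sigma\,\mathrm{diag}(p, q)\,\hat\sigma^{-1}$ as a word in the $120$ generators $\hat g_1, \ldots, \hat g_{120}$ of $\hat\Gamma$ from \S10.3 of \cite{RRT}, which in practice is a computer-assisted calculation.
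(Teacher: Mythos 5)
Your proposal is correct and follows essentially the same strategy as the paper: carry out the explicit quaternionic computation of $\hat\sigma\,\mathrm{diag}(p,q)\,\hat\sigma^{-1}$ modulo $\hat\Gamma$ on generators of $\hat A$, read off that conjugation swaps $\hat A$ and $\hat B$, extract $\alpha$, check it is outer of order $4$, and exhibit $(1,-1)\hat\sigma_\star$ as an order-$2$ lift of $\sigma_\star$. The extra scaffolding you add — the wreath-product structure of ${\rm Aut}(2I\times 2I)$ to predict the swap form, and solving $(\hat\phi\hat\sigma_\star)^2=1$ to derive rather than merely verify the splitting element — is a nice systematization but does not change the route; one small imprecision is that the trivial intersection $\hat\Gamma\cap\widehat{\rm Sym}(\mathcal C)_0=\{I\}$ gives only uniqueness of the decomposition $\hat\sigma\,\mathrm{diag}(p,q)\,\hat\sigma^{-1}=\mathrm{diag}(p',q')\hat\gamma$, while existence needs the short exact sequence $1\to\widehat{\rm Sym}(\mathcal C)_0\to\hat G\to\langle\sigma_\star\rangle\to 1$ already recorded in \S\ref{S:4} (the paper instead derives existence via the Coxeter group $W$ and the tessellation by translates of $\mathcal C$).
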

\begin{proof}
The group $A$ is generated by the following two elements of order 10.
$$\alpha_1 = \frac{1}{2}\left(\begin{array}{ccccc} 
\tau & -1 & 1-\tau & 0 & 0 \\
1 & \tau & 0 &-1+\tau & 0 \\
-1+\tau & 0 & \tau & -1 & 0 \\
0 & 1-\tau & 1 & \tau & 0 \\
0 & 0 & 0 & 0 & 2
\end{array}\right). $$
$$\alpha_2 = \frac{1}{2}\left(\begin{array}{ccccc} 
\tau & -1 & -1+\tau & 0 & 0 \\
1 & \tau & 0 &1-\tau & 0 \\
1-\tau & 0 & \tau & -1 & 0 \\
0 & -1+\tau & 1 & \tau & 0 \\
0 & 0 & 0 & 0 & 2
\end{array}\right).$$
Both $\alpha_1$ and $\alpha_2$ are right-hand twists, since they are conjugate in ${\rm SO}^+(4,1)$ to 
$$\left(\begin{array}{ccccc} 
\cos(\pi/5) & -\sin(\pi/5) & 0 & 0 & 0 \\
\sin(\pi/5) & \cos(\pi/5) & 0 &0 & 0 \\
0 & 0 & \cos(\pi/5) & -\sin(\pi/5) & 0 \\
0 & 0& \sin(\pi/5)& \cos(\pi/5) & 0 \\
0 & 0 & 0 & 0 & 1
\end{array}\right).$$
The elements $\alpha_1,\alpha_2$ lift to the following elements of ${\rm SU}(1,1;\mathbb{H})$, 
$$\hat\alpha_1 = \left(\begin{array}{cc} 
\frac{\tau}{2} + \frac{1}{2}{\bf i} + (-\frac{1}{2}+\frac{\tau}{2}){\bf j} & 0 \\
0 & 1 \end{array}\right), \quad
\hat\alpha_2 = \left(\begin{array}{cc} 
\frac{\tau}{2} + \frac{1}{2}{\bf i} + (\frac{1}{2}-\frac{\tau}{2}){\bf j} & 0 \\
0 & 1 \end{array}\right).$$
Therefore $\hat A$ is represented in ${\rm SU}(1,1;\mathbb{H})$ by $\{{\rm diag}(p,1): p \in 2I\}$. 

Conjugating $\alpha_1$ and $\alpha_2$ by ${\rm diag}(-1,1,1,1,1)$ gives two left-hand twists
$$\beta_1 = \frac{1}{2}\left(\begin{array}{ccccc} 
\tau & 1 & -1+\tau & 0 & 0 \\
-1 & \tau & 0 &-1+\tau & 0 \\
1-\tau & 0 & \tau & -1 & 0 \\
0 & 1-\tau & 1 & \tau & 0 \\
0 & 0 & 0 & 0 & 2
\end{array}\right), $$
$$\beta_2 = \frac{1}{2}\left(\begin{array}{ccccc} 
\tau & 1 & 1-\tau & 0 & 0 \\
-1 & \tau & 0 &1-\tau & 0 \\
-1+\tau & 0 & \tau & -1 & 0 \\
0 & -1+\tau & 1 & \tau & 0 \\
0 & 0 & 0 & 0 & 2
\end{array}\right).$$
of order 10 that generate the group $B$. 

The elements $\beta_1,\beta_2$ lift to the following elements of ${\rm SU}(1,1;\mathbb{H})$, 
$$\hat\beta_1 = \left(\begin{array}{cc} 
1 & 0 \\
0 & \frac{\tau}{2} + \frac{1}{2}{\bf i} + (-\frac{1}{2}+\frac{\tau}{2}){\bf j} \end{array}\right), \quad
\hat\beta_2 = \left(\begin{array}{cc} 
1 & 0 \\
0 & \frac{\tau}{2} + \frac{1}{2}{\bf i} + (\frac{1}{2}-\frac{\tau}{2}){\bf j}\end{array}\right).$$
Therefore $\hat B$ is represented in ${\rm SU}(1,1;\mathbb{H})$ by $\{{\rm diag}(1,q): q \in 2I\}$. 

The $(5,3,3,5)$ Coxeter group $W$ generated by the reflections in the sides of $\Delta$ is 
the group of symmetries of the regular tessellation of $H^4$ with cells $\{\gamma \mathcal C: \gamma \in \Gamma\}$.
The isometric involution $\sigma$ is a symmetry of $\Delta$, and so $\sigma$ normalizes $W$. 
As $\alpha_i \in W$ for each $i = 1, 2$, we have that $\sigma\alpha_i\sigma^{-1} \in W$ for each $i= 1,2$. 
Hence, there exists unique $\gamma_i \in \Gamma$ such that $\sigma\alpha_i\sigma^{-1}\mathcal C = \gamma_i \mathcal C$ 
for each $i = 1,2$. Therefore, $\gamma_i^{-1}\sigma\alpha_i\sigma^{-1} \in {\rm Sym}(\mathcal C)_0$ for each $i = 1,2$. 
We found that $\gamma_1^{-1} = g_{74}g_{36}g_{117}$ and $\gamma_2^{-1} = g_{93}g_{22}g_{117}$. 

Upon lifting to ${\rm SU}(1,1;\mathbb{H})$, we have that 
$$\hat\gamma_1^{-1}\hat\sigma\hat\alpha_1\hat\sigma^{-1} =  \left(\begin{array}{cc} 
1 & 0 \\
0 & \frac{1}{2} - \frac{\tau}{2} +\frac{\tau}{2}{\bf i} + \frac{1}{2}{\bf j} \end{array}\right) = \hat\beta_1^3,$$
$$\hat\gamma_2^{-1}\hat\sigma\hat\alpha_2\hat\sigma^{-1} =  \left(\begin{array}{cc} 
1 & 0 \\
0 & \frac{1}{2} - \frac{\tau}{2} -\frac{\tau}{2}{\bf i} + \frac{1}{2}{\bf j} \end{array}\right) = \hat\beta_2^7.$$
Therefore, the action of $\sigma_\star$ on $\widehat{\rm Sym}(\mathcal C)_0$ induced by conjugation in $\hat G$ 
maps $\hat A$ onto $\hat B$, and vice versa, since conjugation by $\hat\sigma_\star$ is an involution  
since $\hat\sigma^2 = -1$. 

The corresponding automorphism $\alpha$ of $2I$, defined by 
\begin{eqnarray*}
\textstyle{\alpha\big(\frac{\tau}{2} + \frac{1}{2}{\bf i} + (-\frac{1}{2}+\frac{\tau}{2}){\bf j}\big)}\!\! & =  & \textstyle{\frac{1}{2} - \frac{\tau}{2} +\frac{\tau}{2}{\bf i} + \frac{1}{2}{\bf j}} \ \, = \ \, \textstyle{\big(\frac{\tau}{2} + \frac{1}{2}{\bf i} + (-\frac{1}{2}+\frac{\tau}{2}){\bf j}\big)^3} \\
\textstyle{\alpha\big(\frac{\tau}{2} + \frac{1}{2}{\bf i} + (\frac{1}{2}-\frac{\tau}{2}){\bf j}\big)}\!\!& =  &\!\! \textstyle{\frac{1}{2} - \frac{\tau}{2} -\frac{\tau}{2}{\bf i} + \frac{1}{2}{\bf j}}\ \, = \ \, \textstyle{\big(\frac{\tau}{2} + \frac{1}{2}{\bf i} + (\frac{1}{2}-\frac{\tau}{2}){\bf j}\big)^7},\end{eqnarray*}
is an outer automorphism, since if $p$ and $q$ are quaternions, with $q\neq 0$, then $p$ and $qpq^{-1}$ 
have the same real part. Observe that $\alpha$ has order 4. The outer automorphism group of $2I$ has order 2, 
and so $\alpha^2$ is an inner automorphism. In fact, $\alpha^2(p) = {\bf k}p{\bf k}^{-1}$. 
Note that $\alpha(-1) = -1$, since $\langle -1\rangle$ is the center of $2I$.  

In the decomposition $\widehat{\rm Sym}(\mathcal C)_0 = 2I \times 2I$, 
the center of the first factor is $\langle(-1,1)\rangle$ and the center of the second factor is $\langle(1,-1)\rangle$. 
The element $(1,-1)_\star\hat\sigma_\star$ of $\hat G$ has order 2, 
since 
$$(1,-1)_\star\hat\sigma_\star(1,-1)_\star \hat\sigma_\star = (1,-1)_\star (-1,1)_\star \hat\sigma^2_\star = 1.$$
Finally, the exact sequence  $1 \to \widehat{\rm Sym}(\mathcal C)_0 \to \hat G \to \langle \sigma_\star \rangle \to 1$ splits, 
since $(1,-1)_\star\hat\sigma_\star$ maps to $\sigma_\star$ in the sequence. 
\end{proof}

\section{The Representation Theory of the Group $\hat G$}\label{S:5}  

In this section, we describe the complex representation theory of the the group $\hat G$ of order 28,800. 
Our goal is to find all the irreducible representations $\rho$ of $\hat G$ that are {\it spinorial} in the sense 
that $\rho(-1) = -\rho(1)$. 

By Theorem \ref{T:4.1}, we have a split, short, exact sequence
$$1 \to \widehat{\rm Sym}(\mathcal C)_0 \to \hat G \to \langle \sigma_\star \rangle \to 1.$$
The group $\widehat{\rm Sym}(\mathcal C)_0$ is a direct product $\hat A \times \hat B$ 
with each factor isomorphic to the binary icosahedral group $2I$ of order 120.

We begin with the complex representation theory of $2I$. 
The group $2I$ consists of the 24 unit quaternions
$$\{\pm 1, \pm {\bf i}, \pm{\bf j}, \pm{\bf k}, (\pm 1\pm{\bf i} \pm{\bf j}\pm {\bf k})/2\}$$
together with the 96 unit quaternions obtained from $(0\pm{\bf i}\pm \tau{\bf j} \pm \tau^{-1}{\bf k})/2$ 
by an even permutation of all the four coordinates $0,1,\tau,\tau^{-1}$. Note that $\tau^{-1} = \tau - 1$. 

The group $2I$ has exactly 9 conjugacy classes, {\bf 1, 2, 3, 4, 5A, 5B, 6, 10A, 10B}, 
with the numerical part of the name of a conjugacy class the order of each element of the class. 
In Table \ref{ta:2}, the conjugacy classes of $2I$ are classified by the constant real part of each quaternion in the class. 
Hence, $2I$ has exactly 9 irreducible representations, ${\bf 1, 2, 2', 3, 3', 4, 4', 5, 6}$,   
with the numerical part of the name of a representation the complex dimension of the representation. 
A character table for $2I$ is given in Table \ref{ta:3}. For the irreducible representations of $2I$, see \cite{C-P}.

\begin{table} 
$$\begin{array}{|c|rrccccccc|} \hline
Class & {\bf 1} & {\bf 2} & {\bf 3} & {\bf 4} & {\bf 5A} & {\bf 5B} & {\bf 6} &  {\bf 10A} & {\bf 10B} \\ \hline
Re(q) & 1 & -1 & -1/2 & 0 &  (\tau-1)/2 & -\tau/2 & 1/2 & \tau/2 & (1-\tau)/2 \\ 
Size & 1 & 1 & 20 & 30 & 12 & 12 & 20 & 12 & 12 \\ \hline
\end{array}$$
\caption{The conjugacy classes of the binary icosahedral group $2I$}\label{ta:2}
\end{table}

\begin{table} 
$$\begin{array}{|c|rrrrrrrrr|} \hline
Rep & {\bf 1} & {\bf 2} & {\bf 3} & {\bf 4} & {\bf 5A} & {\bf 5B} & {\bf 6} &  {\bf 10A} & {\bf 10B} \\ \hline
{\bf 1} & 1 & 1 & 1 & 1 & 1 & 1 & 1 & 1 & 1\\ 
{\bf 2} & 2 & -2 & -1& 0 & \tau-1 & -\tau & 1 & \tau & 1-\tau\\ 
{\hspace{.04in}\bf 2'} & 2 & -2 & -1& 0 & -\tau & \tau-1 & 1 & 1-\tau & \tau\\ 
{\bf 3} & 3 & 3 & 0 & -1 & 1-\tau & \tau & 0 & \tau & 1-\tau\\ 
{\hspace{.04in}\bf 3'} & 3 & 3 & 0 & -1 & \tau & 1-\tau & 0 & 1-\tau & \tau\\ 
{\bf 4} & 4 & 4 & 1 & 0 & -1 & -1 & 1 & -1 & -1\\ 
{\hspace{.04in}\bf 4'} & 4 & -4 & 1 & 0 & -1 & -1 & -1 & 1 & 1\\
{\bf 5} & 5 & 5 & -1 & 1 & 0 & 0 & -1 & 0 & 0\\ 
{\bf 6} & 6 & -6 & 0 & 0 & 1 & 1 & 0 & -1 & -1 \\ \hline
\end{array}$$
\caption{The character table for the binary icosahedral group $2I$}\label{ta:3}
\end{table}

A representation $\rho$ of $2I$ is said to be {\it spinorial} if $\rho(-1) = -\rho(1)$. 
From the second column of Table \ref{ta:3}, we see that the spinorial irreducible representations of $2I$ are ${\bf 2,2',4', 6}$. 

Every irreducible representation of $2I \times 2I$ is of the form $\rho_1\otimes \rho_2$ where $\rho_1$ and $\rho_2$ 
are irreducible representations of $2I$.  Therefore $2I \times 2I$ has $81$ irreducible representations. 
A representation $\rho_1\otimes \rho_2$ of $2I \times 2I$ is said to be {\it spinorial} if $$(\rho_1\otimes \rho_2)(-1,-1) = -(\rho_1(1)\otimes \rho_2(1)).$$  As $(\rho_1\otimes \rho_2)(-1,-1) = \rho_1(-1)\otimes\rho_2(-1)$, this will be the case if and only if one of $\rho_1, \rho_2$ is spinorial and the other is not. Hence, there are exactly 40 spinorial irreducible representations of $2I\times 2I$. 
As $\dim (\rho_1\otimes \rho_2) = (\dim\rho_1)(\dim\rho_2)$, all the spinorial irreducible representations of $2I\times 2I$ are even dimensional.

We have a split, short, exact sequence 
$$1\to 2I\times 2I \to \hat G \to \langle \sigma_\star \rangle \to 1.$$
Let $s$ be an element of $\hat G$ of order 2 that maps to $\sigma_\star$ in the above sequence. 
Let $\theta$ be a representation of $2I \times 2I$, and let $\theta^s$ be the representation of $2I \times 2I$ defined by 
$$\theta^s(p,q) = \theta(s^{-1}(p,q)s) = \theta(\alpha^{-1}(q), \alpha(p))$$
where $\alpha$ is the outer automorphism of $2I$ of order 4 from Theorem \ref{T:4.1} such that $\alpha = \alpha^2\alpha^{-1}$ and 
$\alpha^2$ is an inner automorphism of $2I$.

The representation of $\hat G$ {\it induced} by $\theta$
is the representation $\rho$ of $\hat G$ that extends the representation $\theta\oplus \theta^s$ of $2I\times 2I$ 
and such that $s$ acts by $s(v\oplus w) = w\oplus v$. 
It follows from Mackey's irreduciblity criterion that 
all the spinorial irreducible representations of $\hat G$ are induced by spinorial irreducible 
representations of $2I\times 2I$. 

Let $\rho_1$ and $\rho_2$ be representations of $2I$ such that $\rho_1\otimes \rho_2$ is a spinorial representation of $2I\times 2I$. 
Then 
$$(\rho_1\otimes \rho_2)^s (p,q) = (\rho_1\otimes\rho_2)(\alpha^{-1}(q),\alpha(p)) = \rho_1(\alpha^{-1}(q))\otimes\rho_2(\alpha(p)).$$
This implies that 
$$(\rho_1\otimes \rho_2)^s \cong \rho_2' \otimes \rho_1'$$
where $\rho_i' = \rho_i$ if $\dim\rho_i \neq 2,3$ or else 
$\rho_i'$ is the other irreducible representation of the dimension of $\rho_i$ when $\dim\rho_i = 2,3$. 
Thus, the corresponding induced representation of $\hat G$ is isomorphic to 
a representation that extends $(\rho_1\otimes\rho_2)\oplus(\rho_2'\otimes\rho_1')$.

The group $\hat G$ has exactly 54 conjugacy classes, and so $\hat G$ has exactly 54 irreducible representations 
up to isomorphism. There are exactly 20 spinorial irreducible representation of $\hat G$. 
These representations extend the reducible representations of $2I\times 2I$ listed in Table \ref{ta:4}. 

\begin{table} 
$$\begin{array}{|c|l|c|l|} \hline
Dim & Representation & Dim & Representation \\ \hline
4 & ({\bf 1}\otimes {\bf 2})\oplus({\bf 2'}\otimes {\bf 1}) & 20 & ({\bf 2}\otimes {\bf 5})\oplus({\bf 5}\otimes {\bf 2'}) \\ 
4 & ({\bf 1}\otimes {\bf 2'})\oplus({\bf 2}\otimes {\bf 1}) & 20 & ({\bf 2'}\otimes {\bf 5})\oplus({\bf 5}\otimes {\bf 2}) \\ 
8 & ({\bf 1}\otimes {\bf 4'})\oplus({\bf 4'}\otimes {\bf 1}) & 24 & ({\bf 3}\otimes {\bf 4'})\oplus({\bf 4'}\otimes {\bf 3'})  \\ 
12 & ({\bf 1}\otimes {\bf 6})\oplus({\bf 6}\otimes {\bf 1}) & 24 & ({\bf 3'}\otimes {\bf 4'})\oplus({\bf 4'}\otimes {\bf 3})  \\ 
12 & ({\bf 2}\otimes {\bf 3})\oplus({\bf 3'}\otimes {\bf 2'}) & 32 & ({\bf 4}\otimes {\bf 4'})\oplus({\bf 4'}\otimes {\bf 4})  \\ 
12 & ({\bf 2}\otimes {\bf 3'})\oplus({\bf 3}\otimes {\bf 2'}) & 36 & ({\bf 3}\otimes {\bf 6})\oplus({\bf 6}\otimes {\bf 3'})  \\ 
12 & ({\bf 2'}\otimes {\bf 3})\oplus({\bf 3'}\otimes {\bf 2}) & 36 & ({\bf 3'}\otimes {\bf 6})\oplus({\bf 6}\otimes {\bf 3})  \\ 
12 & ({\bf 2'}\otimes {\bf 3'})\oplus({\bf 3}\otimes {\bf 2}) & 40 & ({\bf 4'}\otimes {\bf 5})\oplus({\bf 5}\otimes {\bf 4'}) \\ 
16 & ({\bf 2}\otimes {\bf 4})\oplus({\bf 4}\otimes {\bf 2'}) & 48 & ({\bf 4}\otimes {\bf 6})\oplus({\bf 6}\otimes {\bf 4})  \\ 
16 & ({\bf 2'}\otimes {\bf 4})\oplus({\bf 4}\otimes {\bf 2}) & 60 & ({\bf 5}\otimes {\bf 6})\oplus({\bf 6}\otimes {\bf 5})  \\ \hline
\end{array}$$

\vspace{.15in}
\caption{The spinorial irreducible representations of $\hat G$}\label{ta:4}
\end{table}

The group $G$ of orientation-preserving isometries of the Davis hyperbolic 4-manifold has exactly 34 conjugacy classes, 
and so $G$ has exactly 34 irreducible representations. 
The 34 nonspinorial irreducible representations of $\hat G$ are lifts of the 34 irreducible representations of $G$. 

The nonspinorial irreducible representations of $\hat G$ are of three types; they are either induced, and so are of the same form as the spinorial irreducible representations, or they are of the form $\pm (\rho_1\otimes \rho_2)$ with $\rho_1$ and $\rho_2$ 
irreducible representations of $2I$ and $\pm (\rho_1\otimes \rho_2)$ extending $\rho_1\otimes \rho_2$, 
and $s$ acting by $s(v\otimes w) = \pm (w\otimes v)$.  
The 34 nonspinorial irreducible representations of $\hat G$ are listed in Table \ref{ta:5}. 

We checked that we have found all the irreducible representations of $\hat G$ by computing a character table for $\hat G$ 
and verifying all the orthonormality conditions for a character table. 
All the entries of the character table are algebraic integers in $\mathbb Q [\tau]$. 

\begin{table} 
$$\begin{array}{|c|l|c|l|} \hline
Dim & Representation & Dim & Representation \\ \hline
1 & \phantom{-1}{\bf 1}\otimes {\bf 1} & 16 &  \phantom{-1}{\bf 4}\otimes {\bf 4}\ \\ 
1 & -({\bf 1}\otimes {\bf 1})                  & 16 & -( {\bf 4}\otimes {\bf 4}) \\ 
4 & \phantom{-1}{\bf 2}\otimes {\bf 2'} & 16 &  \phantom{-1}{\bf 4'}\otimes {\bf 4'} \\ 
4 & -({\bf 2}\otimes {\bf 2'})                  & 16 & -({\bf 4'}\otimes {\bf 4'}) \\ 
4 & \phantom{-1}{\bf 2'}\otimes {\bf 2} & 18 & ({\bf 3}\otimes {\bf 3})\oplus({\bf 3'}\otimes {\bf 3'}) \\ 
4 & -({\bf 2'}\otimes {\bf 2})                  & 24 & ({\bf 2}\otimes {\bf 6})\oplus({\bf 6}\otimes {\bf 2'}) \\ 
6 & ({\bf 1}\otimes {\bf 3})\oplus({\bf 3'}\otimes {\bf 1}) & 24 & ({\bf 2'}\otimes {\bf 6})\oplus({\bf 6}\otimes {\bf 2})  \\ 
6 & ({\bf 1}\otimes {\bf 3'})\oplus({\bf 3}\otimes {\bf 1}) & 24 & ({\bf 3}\otimes {\bf 4})\oplus({\bf 4}\otimes {\bf 3'})  \\ 
8 & ({\bf 1}\otimes {\bf 4})\oplus({\bf 4}\otimes {\bf 1}) & 24 & ({\bf 3'}\otimes {\bf 4})\oplus({\bf 4}\otimes {\bf 3})  \\ 
8 & ({\bf 2}\otimes {\bf 2})\oplus({\bf 2'}\otimes {\bf 2'}) & 25 & \phantom{-1}{\bf 5}\otimes {\bf 5}  \\ 
9 & \phantom{-1}{\bf 3}\otimes {\bf 3'}                           & 25 & -({\bf 5}\otimes {\bf 5}) \\ 
9 & -({\bf 3}\otimes {\bf 3'})& 30 & ({\bf 3}\otimes {\bf 5})\oplus({\bf 5}\otimes {\bf 3'}) \\ 
9 & \phantom{-1}{\bf 3'}\otimes {\bf 3} & 30 & ({\bf 3'}\otimes {\bf 5})\oplus({\bf 5}\otimes {\bf 3}) \\ 
9 & -({\bf 3'}\otimes {\bf 3})                                             & 36 &\phantom{-1}{\bf 6}\otimes {\bf 6}  \\ 
10 & ({\bf 1}\otimes {\bf 5})\oplus({\bf 5}\otimes {\bf 1}) & 36 &-({\bf 6}\otimes {\bf 6})   \\ 
16 & ({\bf 2}\otimes {\bf 4'})\oplus({\bf 4'}\otimes {\bf 2'}) & 40 & ({\bf 4}\otimes {\bf 5})\oplus({\bf 5}\otimes {\bf 4}) \\ 
16 & ({\bf 2'}\otimes {\bf 4'})\oplus({\bf 4'}\otimes {\bf 2}) & 48 & ({\bf 4'}\otimes {\bf 6})\oplus({\bf 6}\otimes {\bf 4'})  \\ \hline
\end{array}$$

\vspace{.15in}
\caption{The nonspinorial irreducible representations of $\hat G$}\label{ta:5}
\end{table}

\section{On totally geodesic submanifolds in a hyperbolic manifold}\label{S:6}  

The next theorem is known to experts, cf. p 38 \cite{G-L-T}. 

\begin{theorem}\label{T:6.1} 
If $\Sigma$ is a totally geodesic, orientable, embedded, closed submanifold, of codimension $k >0$, of a 
connected, orientable, closed, hyperbolic manifold $M$,   
then 
\begin{enumerate}
\item The normal bundle $\nu(\Sigma)$ is flat. 
\item The normal bundle $\nu(\Sigma)$ is orientable. 
\item If $k$ is even, then the Euler class of $\nu(\Sigma)$ is zero. 
\item If $k = 2$, then $\nu(\Sigma)$ is trivial. 
\end{enumerate}
\end{theorem}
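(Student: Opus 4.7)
The plan is to treat the four parts sequentially, exploiting the fact that a totally geodesic $\Sigma$ sits inside $M$ like $H^{n-k}$ inside $H^n$, so the normal bundle is governed by the explicit curvature tensor of a space of constant sectional curvature $-1$.

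For (1), I would apply the Ricci equation
\[
\langle R^\perp(X,Y)\xi,\eta\rangle \;=\; \langle R^M(X,Y)\xi,\eta\rangle + \langle [A_\xi, A_\eta]X, Y\rangle
\]
for $X, Y \in T\Sigma$ and $\xi, \eta \in \nu(\Sigma)$. The totally geodesic hypothesis forces every shape operator $A_\xi$ to be zero, killing the commutator term. Substituting the constant-curvature formula
\[
R^M(X,Y)Z \;=\; \langle X, Z\rangle Y - \langle Y, Z\rangle X
\]
and using $\langle X,\xi\rangle = \langle Y,\xi\rangle = 0$ would show the first term vanishes as well, so $R^\perp \equiv 0$ and the normal connection is flat. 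For (2), the short exact sequence
\[
0 \to T\Sigma \to TM|_\Sigma \to \nu(\Sigma) \to 0
\]
of real vector bundles has both outer terms oriented (by the hypotheses on $\Sigma$ and $M$), so the quotient $\nu(\Sigma)$ is uniquely oriented by the requirement that the direct-sum orientation on $T\Sigma \oplus \nu(\Sigma)$ match the ambient orientation on $TM|_\Sigma$.

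For (3), the plan is to invoke Chern--Weil: for an oriented real vector bundle of even rank $k$ with a metric connection, the Euler class in de Rham cohomology is represented by the Pfaffian of the curvature 2-form. By (1) the curvature vanishes, so the Pfaffian vanishes, giving $e(\nu(\Sigma)) = 0$ in $H^k(\Sigma;\mathbb{R})$ (equivalently, the integer Euler class is torsion).

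For (4), I would specialize (3) to $k = 2$: here $\nu(\Sigma)$ is an oriented rank-2 flat bundle, i.e.\ a flat complex line bundle, and is classified up to isomorphism by $c_1(\nu(\Sigma)) = e(\nu(\Sigma)) \in H^2(\Sigma;\mathbb{Z})$. The main obstacle is to upgrade the conclusion of (3)---which only gives that this class is torsion---to the stronger statement that it vanishes integrally. The cleanest route is via the holonomy representation $\rho\colon \pi_1(\Sigma) \to SO(2) = \mathbb{R}/\mathbb{Z}$ of the flat connection: its class $[\rho] \in H^1(\Sigma;\mathbb{R}/\mathbb{Z})$ maps to $e(\nu(\Sigma))$ under the Bockstein of $0 \to \mathbb{Z} \to \mathbb{R} \to \mathbb{R}/\mathbb{Z} \to 0$, and $e(\nu(\Sigma)) = 0$ is equivalent to $\rho$ lifting to a homomorphism $\pi_1(\Sigma) \to \mathbb{R}$. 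In the case of principal interest for this paper, $\Sigma$ is a closed orientable surface, so $H_1(\Sigma;\mathbb{Z})$ is torsion-free, every such $\rho$ lifts automatically, and triviality of $\nu(\Sigma)$ drops out of (3).
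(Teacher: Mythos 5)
For parts (1)--(3) your argument is substantively the paper's: the second fundamental form vanishes because $\Sigma$ is totally geodesic, so all shape operators vanish; the Ricci equation (with the constant-curvature tensor of $H^n$ contributing nothing when evaluated on tangent--normal pairs) then kills the normal curvature; and the Pfaffian/Chern--Weil representative of $e(\nu(\Sigma))$ vanishes with it. For (2) you orient $\nu(\Sigma)$ directly from the short exact sequence $0\to T\Sigma\to TM|_{\Sigma}\to\nu(\Sigma)\to 0$, whereas the paper computes $w_1(\nu(\Sigma))=0$ via the Whitney sum formula; both are standard and equally short.

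The genuine difference is in (4), and it is to your credit. You correctly observe that the Pfaffian argument only kills the \emph{real} Euler class, so \emph{a priori} $e(\nu(\Sigma))\in H^2(\Sigma;\mathbb{Z})$ could be nonzero torsion; a flat hermitian line bundle has $c_1$ torsion, not necessarily zero. Your fix via the flat holonomy $\rho\colon\pi_1(\Sigma)\to SO(2)=\mathbb{R}/\mathbb{Z}$ and the Bockstein is the right diagnosis: $\nu(\Sigma)$ is trivial iff $\rho$ lifts to $\mathbb{R}$, which, since $\rho$ factors through $H_1(\Sigma;\mathbb{Z})$, is automatic exactly when the torsion subgroup of $H_1(\Sigma;\mathbb{Z})$ is trivial. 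That is always the case when $\Sigma$ is a closed orientable surface, which is the only case in which Theorem \ref{T:6.1}(4) is invoked (Lemma \ref{L:7.2}, $C\subset M^4$). The paper's own proof passes from Part 3 directly to ``$c_1(\nu(\Sigma))=0$'' without distinguishing the real from the integral Euler class, and so shares the latent gap you flagged; your account is the more careful one, at the cost of establishing (4) only under the torsion-free $H_1$ hypothesis, which nonetheless covers every application in the paper.
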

\begin{proof}
(1) The second fundamental form $II$ of $\Sigma$ is zero, since $\Sigma$ is totally geodesic, cf. p 33 \cite{K}. 
Hence, the shape operator $A_v$ of $\Sigma$ in any normal direction $v$ is zero, since 
$\langle II(u_1,u_2),v\rangle = \langle A_v(u_1),u_2\rangle$, cf. p 26 \cite{K}. 
Therefore, the normal curvature $\Omega^\nu$ of $\Sigma$ is zero, since 
$\Omega^\nu(u,v) = [A_u,A_v]$ by Proposition 2.1.1, p 29 \cite{K}. 
Therefore $\nu(\Sigma)$ is a flat bundle, cf. p 29 \cite{K}. 

(2) We have that $T(\Sigma)\oplus \nu(\Sigma) \cong T(M)|_\Sigma$ by Corollary 3.4 of \cite{M-S}.
A smooth manifold is orientable if and only if its tangent bundle is orientable by Lemma 11.6 of \cite{M-S}. 
Hence $T(\Sigma)$ and $T(M)$ are orientable, and so $T(M)|_\Sigma$ is orientable. 
A vector bundle over a smooth manifold is orientable if and only if its first Stiefel-Whitney class is zero, cf. p 148 of \cite{M-S}. 
By the Whitney Product Theorem, pp 37-38 \cite{M-S}, we have that 
$$w_1(\nu(\Sigma))) = w_1(T(\Sigma)) + w_1(\nu(\Sigma)) = w_1(T(M)|_\Sigma) = 0.$$
Therefore $\nu(\Sigma)$ is orientable.   

(3) Assume that $k$ is even. The normal connection $\nabla^\nu$ is compatible with the Euclidean metric on $\nu(\Sigma)$ induced by the Riemannian metric on $M$ by Lemma 7 on p 300 \cite{M-S}, the second Formula 2.1.1 on p 26 \cite{P-T}, and the last formula on p 27 \cite{P-T}. 
Therefore ${\rm Pf}(\Omega^\nu/2\pi)$ represents the Euler class $e(\nu(\Sigma))$ 
by the Generalized Gauss-Bonnet Theorem, p 311 \cite{M-S}. 
Hence $e(\nu(\Sigma)) = 0$, since $\Omega^{\nu} = 0$ by Part 1.

(4) Assume that $k =2$. The normal bundle $\nu(\Sigma)$ is oriented by the orientations of $\Sigma$ and $M$ by Part 2, 
and so $\nu(\Sigma)$ has a canonical complex structue, cf. p 305 \cite{M-S}. 
Now $c_1(\nu(\Sigma)) = e(\nu(\Sigma))$, and so $c_1(\nu(\Sigma)) = 0$ by Part 3. 
Therefore $\nu(\Sigma)$ is trivial, since the first Chern class is a complete invariant of complex line bundles. 
\end{proof}

\begin{theorem}\label{T:6.2} 
If $\phi$ is an orientation-preserving isometry of a connected, orientable, closed, Riemannian $2m$-manifold $M$ 
that fixes a point of $M$, then  
\begin{enumerate} 
\item The set $M^\phi$ of fixed points of $\phi$ has only finitely many connected components, 
and each connected component of $M^\phi$ is a totally geodesic, closed, even-dimensional,  embedded submanifold of $M$. 
\item If the order of $\phi$ is $2$ and $\phi$ lifts to a symmetry of order $4$ of a spin structure on $M$, 
then the co-dimension of each connected component of $M^\phi$ is congruent to $2$ modulo $4$. 
If $m$ is even, each fixed point of $\phi$ is non-isolated. 
\item If the order of $\phi$ is $2$ and $\phi$ lifts to a symmetry of order $2$ of a spin structure on $M$, 
then the co-dimension of each connected component of $M^\phi$ is congruent to $0$ modulo $4$.  
If $m$ is odd, each fixed point of $\phi$ is non-isolated. 
\item If the order of $\phi$ is even and $\phi$ lifts to a symmetry, of the same order, of a spin structure on $M$, and $m =2$, 
then each fixed point of $\phi$ is isolated.  
\item If the order of $\phi$ is greater than $2$, then each codimension-two connected component of $M^\phi$ is orientable.
\end{enumerate}
\end{theorem}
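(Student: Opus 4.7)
The plan is to handle the five parts in the order they are stated, since the later parts build on the earlier ones.

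For Part (1), I would linearize $\phi$ at a fixed point $p$. Since $\phi$ is an isometry, the exponential map intertwines $d\phi_p$ with $\phi$, so in a normal neighborhood $M^\phi$ coincides with $\exp_p$ applied to the fixed subspace of $d\phi_p$ in $T_pM$; this simultaneously shows each component of $M^\phi$ is an embedded totally geodesic submanifold and that its dimension equals $\dim\ker(d\phi_p-I)$. Because $\phi$ is orientation-preserving and $\dim M = 2m$, the non-unit eigenvalues of the orthogonal matrix $d\phi_p$ come in complex conjugate pairs and the $-1$ eigenspace is even dimensional, so the fixed subspace has even dimension. Finite-ness of components follows from compactness of $M$ together with the fact that each component is a closed submanifold of a fixed (even) dimension.

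For Parts (2) and (3), the main computation is Clifford-algebraic. At a fixed point $p$, choose an orthonormal frame so that $d\phi_p = \mathrm{diag}(I_k,-I_c)$ with $c=2m-k$ the codimension. The two lifts of $d\phi_p$ to $\mathrm{Spin}(2m)$ are $\pm\, e_{k+1}e_{k+2}\cdots e_{2m}$, and in the Clifford algebra $(e_{k+1}\cdots e_{2m})^2=(-1)^{c(c-1)/2}$, which equals $+1$ when $c\equiv 0\pmod 4$ and $-1$ when $c\equiv 2\pmod 4$. Since $\hat\phi^2$ is a global element of the spin group $\{\pm 1\}$, the value $\hat\phi^2 = (-1)^{c/2}\cdot(\pm 1)$ must be the same on every fiber over $M^\phi$; consequently the parity of $c/2$ is uniform across all components and is determined by whether $\hat\phi$ has order $2$ or $4$. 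The non-isolated clauses then drop out by comparing $c=2m$ with $2m\pmod 4$: $m$ even gives $2m\equiv 0$, incompatible with Part (2); $m$ odd gives $2m\equiv 2$, incompatible with Part (3).

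For Part (4), I would reduce to Part (3). Write the order as $n=2k$. Then $\hat\phi^k\in\hat G$ squares to $\hat\phi^n=1$, and cannot itself be $1$ or else $\hat\phi$ would have order dividing $k<n$; thus $\hat\phi^k$ is a spin-structure symmetry of order exactly $2$ lifting the order-$2$ isometry $\phi^k$. Apply Part (3) with $m=2$: the codimensions of the components of $M^{\phi^k}$ are $\equiv 0\pmod 4$, hence are $0$ or $4$, and the $0$-codimensional case is excluded since $\phi^k\neq 1$. Therefore $M^{\phi^k}$ is discrete, and the inclusion $M^\phi\subseteq M^{\phi^k}$ finishes.

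For Part (5), let $N$ be a codimension-two component and pick $p\in N$. The differential $d\phi_p$ acts on the oriented $2$-plane $\nu_p(N)$ as a rotation by some angle $\theta$; the angle $\theta$ is constant along the connected set $N$ because $d\phi$ acts as a parallel bundle automorphism. I would rule out $\theta\in\{0,\pi\}$: $\theta=0$ would enlarge the fixed subspace, while $\theta=\pi$ would give $d(\phi^2)_p=I$, and since an isometry of a connected Riemannian manifold is determined by its $1$-jet at a point (the standard rigidity of isometries), this forces $\phi^2=\mathrm{id}$, contradicting that $\phi$ has order greater than $2$. With $\theta\in(0,\pi)\cup(\pi,2\pi)$ fixed, the only elements of $O(2)$ commuting with rotation by $\theta$ lie in $SO(2)$. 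The normal holonomy of $N$ commutes with $d\phi$ (since $\phi|_N=\mathrm{id}$ and $\phi$ preserves the Levi-Civita connection, so it preserves the induced normal connection and parallel transport along loops in $N$), so the holonomy group of $\nu(N)$ sits inside $SO(2)$, which is exactly the orientability of $\nu(N)$ (equivalent to orientability of $N$ since $M$ is oriented).

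I expect the main subtlety to be the Clifford-algebra identification in Parts (2)–(3) (keeping track of the square of the volume element and ensuring the local-to-global compatibility of the sign $\hat\phi^2$); once that is set up cleanly, Parts (4) and (5) are quite short, and Part (1) is standard.
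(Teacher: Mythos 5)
Your proposal is correct and reaches the same conclusions as the paper throughout, but it replaces two citations with direct arguments. For Part (1) the paper simply cites Theorem II.5.1 of Kobayashi for the fixed-point set being a disjoint union of totally geodesic embedded submanifolds; your linearization-and-exponential argument is the standard proof of that theorem, so this is the same route made explicit. For Parts (2) and (3) the paper invokes Proposition 8.46 of Atiyah--Bott as a black box, whereas you re-derive it: the lift of $d\phi_p=\operatorname{diag}(I_k,-I_c)$ to $\mathrm{Spin}(2m)$ is $\pm e_{k+1}\cdots e_{2m}$, whose square is $(-1)^{c(c-1)/2}$, and since $\hat\phi^2\in\{\pm1\}$ is a single global element this sign is forced to match the order of $\hat\phi$ on every component; this is a clean, self-contained Clifford-algebra route to exactly the same dichotomy. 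Part (4) is the same reduction to Parts (1) and (3) used in the paper (and you correctly make explicit the exclusion of the codimension-$0$ case, which the paper leaves implicit). For Part (5) the paper orients $\nu(C)$ directly by noting that the ordered pair $(v,\phi_\ast v)$ gives a well-defined orientation of each fiber because $d\phi$ acts on $\nu$ by a rotation through a fixed angle in $(0,\pi)\cup(\pi,2\pi)$, and then applies the Whitney product formula; your argument instead shows the normal holonomy centralizes that rotation and hence lands in $\mathrm{SO}(2)$, which also yields orientability of $\nu(N)$ (and you close with the same Whitney-sum observation). Your holonomy argument is a little heavier than the paper's pointwise orientation, but it is equally valid; both rely on the same key fact, namely that the rotation angle is neither $0$ nor $\pi$, which you justify by rigidity of isometries exactly as one must. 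One small slip: in Part (1) you say finiteness follows because each component has ``a fixed (even) dimension,'' but different components may have different dimensions; finiteness really follows just from compactness of $M^\phi$ together with openness of each component in $M^\phi$, which you already have from the linearization.
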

\begin{proof}
(1) Let $C$ be a connected component of $M^\phi$. Then $C$ is a totally geodesic submanifold of $M$ by Theorem II.5.1 of \cite{K}, 
whose proof shows that $C$ is an embedded submanifold of $M$ and $C$ is an open subset of $M^\phi$. 
The set $M^\phi$ is compact, since $M$ is compact and $M^\phi$ is a closed subset of $M$. 
Therefore $M^\phi$ has only finitely many connected components. 
A component $C$ of $M^\phi$ is a closed manifold, since $C$ is a closed subset of $M^\phi$, 
and so $C$ is compact. The dimension of $C$ is the same as the multiplicity of the $+1$ eigenvalues of the action of $\phi$ 
on a fiber of the tangent bundle of $M$ over a point of $C$, and so ${\rm dim}(C)$ is even, since ${\rm dim}(M)$ is even.

(2) Assume that ${\rm ord}(\phi) = 2$, and $\phi$ lifts to a symmetry $\hat\phi$ of order 4 of a spin structure on $M$. 
Then the co-dimension of each connected component of $M^\phi$ is congruent to $2$ modulo $4$ by Proposition 8.46 of \cite{A-B}. 
If $m$ is even, then ${\rm dim}(M) \equiv 0\ \hbox{mod}\ 4$, and so the dimension of each connected component of $M^\phi$ is positive, 
and so each fixed point of $\phi$ is non-isolated. 

(3) Assume that ${\rm ord}(\phi) = 2$, and $\phi$ lifts to a symmetry $\hat\phi$ of order 2 of a spin structure on $M$. 
Then the co-dimension of each connected component of $M^\phi$ is congruent to $0$ modulo $4$ by Proposition 8.46 of \cite{A-B}. 
If $m$ is odd, then ${\rm dim}(M) \equiv 2\ \hbox{mod}\ 4$, and so the dimension of each connected component of $M^\phi$ is positive, 
and so each fixed point of $\phi$ is non-isolated. 

(4) Assume that ${\rm ord}(\phi) = 2k$, with $k$ a positive integer, 
$\phi$ lifts to a symmetry $\hat\phi$, of order $2k$, of a spin structure on $M$, and $m = 2$. 
Then $\phi^k$ and $\hat\phi^k$ have order 2.  
As $2m =4$, we have that $\phi^k$ has only finitely many fixed points by Parts 1 and 3. 
As every point fixed by $\phi$ is also fixed by $\phi^k$, 
every fixed point of $\phi$ is isolated.

(5) Assume ${\rm ord}(\phi) > 2$. Let $C$ be a co-dimension-two connected component of $M^\phi$, let $\nu(C)$ be the normal bundle of $C$, and 
let $v$ be a normal vector in a fiber of $\nu(C)$ over a point $x$ of $C$. Then the ordered pair $(v, \phi_\ast v)$ is a basis 
for the fiber of $\nu(C)$ over $x$ that is independent of the choice of $v$, since $\phi_\ast$ is a rotation by the angle $2\pi/{\rm ord}(\phi)$. 
Therefore, $\nu(C)$ is oriented by the induced action of $\phi$ on each fiber of $\nu(C)$.  
By the argument in the proof of Theorem 6.1(2), we have that 
$$w_1(T(C)) = w_1(T(C)) + w_1(\nu(C)) = w_1(T(M)|_C) = 0.$$
Therefore $T(C)$ is orientable, and so $C$ is orientable.
\end{proof}

\section{Spin Numbers}\label{S:7}  

Let $\phi$ be an orientation-preserving isometry of a spin, closed, hyperbolic $2m$-manifold $M = \Gamma\backslash H^{2m}$, with $\Gamma$ a discrete torsion-free subgroup of ${\rm SO}^+(2m,1)$. 
Suppose that $\phi$ lifts to a symmetry $\hat \phi$ of the spin structure $\hat\Gamma\backslash{\rm Spin}^+(2m,1)$ of $M$, 
where $\hat\Gamma$ is a discrete subgroup of ${\rm Spin}^+(2m,1)$ that projects isometrically onto $\Gamma$.  
Then $\hat \phi$ acts on the spaces ${\mathcal H}^{\pm}$ of positive (negative) harmonic spinors of the spin structure 
giving two spinorial representations $\rho^{\pm}: \hat G \to {\rm GL}(\mathcal H^{\pm})$. 

The {\it spin number} of $\hat \phi$ is defined to be 
\begin{equation}\label{E:1}
{\rm Spin}(\hat \phi, M) = {\rm tr}(\rho^+(\hat \phi)) - {\rm tr}(\rho^-(\hat \phi)).
\end{equation}
Note that the $\hat G$-index, ${\rm Spin}(\hat G,M)$, is determined by the set of spin numbers $\{{\rm Spin}(\hat \phi,M):\hat\phi \in \hat G\}$, 
since a representation is determined by its character.  

From Formula \ref{E:1}, we see that ${\rm Spin}(\hat \phi, M)$ is an algebraic integer, since $\hat\phi$ has finite order. 
For example, if $\phi = 1$, then $\hat \phi = \pm 1$, and ${\rm Spin}(\hat \phi, M) = 0$, 
since ${\rm dim}\, {\mathcal H}^+ = {\rm dim}\, {\mathcal H}^-$. 
The spin number ${\rm Spin}(\hat \phi, M)$ may depend on the lift $\hat \phi$ of $\phi$, of 
which there are two $\pm\hat \phi$, since 
\begin{equation}\label{E:2}
{\rm Spin}(-\hat \phi, M) = -{\rm Spin}(\hat \phi, M).
\end{equation}
Spin numbers are obviously invariant under conjugation, that is, 
if $\hat \psi$ is a lift of an orientation-preserving isometry $\psi$ of $M$ to a symmetry of the spin structure, 
then 
\begin{equation}\label{E:3}
{\rm Spin}(\hat \psi\hat \phi\hat \psi^{-1}, M) = {\rm Spin}(\hat \phi, M).
\end{equation}

\begin{lemma}\label{L:7.1} 
If $\hat \psi\hat \phi\hat \psi^{-1} = -\hat \phi$, then $\phi$ has even order, and ${\rm Spin}(\hat \phi,M) = 0$. 
\end{lemma}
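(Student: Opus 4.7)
The plan is to derive both conclusions almost directly from Formulas (\ref{E:2}) and (\ref{E:3}), together with a short calculation in the double cover $\eta\colon {\rm Spin}^+(2m,1) \to {\rm SO}^+(2m,1)$. The hypothesis $\hat\psi\hat\phi\hat\psi^{-1} = -\hat\phi$ is a conjugation identity, so (\ref{E:3}) will give one equation and (\ref{E:2}) will give another; the two will be inconsistent unless ${\rm Spin}(\hat\phi, M) = 0$, and a parallel conjugation computation in the spin group will pin down the parity of the order of $\phi$.

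For the vanishing of the spin number, I would first apply Formula (\ref{E:3}) with the given $\hat\psi$ to obtain ${\rm Spin}(-\hat\phi, M) = {\rm Spin}(\hat\phi, M)$, since by hypothesis $\hat\psi\hat\phi\hat\psi^{-1} = -\hat\phi$. On the other hand, Formula (\ref{E:2}) gives ${\rm Spin}(-\hat\phi, M) = -{\rm Spin}(\hat\phi, M)$. Combining the two identities forces $2\,{\rm Spin}(\hat\phi, M) = 0$, and hence ${\rm Spin}(\hat\phi, M) = 0$ (this last step is legitimate because ${\rm Spin}(\hat\phi, M)$ is an algebraic integer in $\mathbb{C}$).

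For the evenness of the order of $\phi$, let $n$ denote the order of $\phi$. Since $\phi^n = 1$ in ${\rm SO}^+(2m,1)$, the lift must satisfy $\hat\phi^n \in \ker\eta = \{\pm 1\}$, so $\hat\phi^n = \varepsilon$ for some $\varepsilon \in \{\pm 1\}$. I would then compute $(-\hat\phi)^n$ in two ways. Directly, $(-\hat\phi)^n = (-1)^n \hat\phi^n = (-1)^n \varepsilon$. Alternatively, using the hypothesis and the fact that $\varepsilon$ is central in ${\rm Spin}^+(2m,1)$,
$$(-\hat\phi)^n = (\hat\psi\hat\phi\hat\psi^{-1})^n = \hat\psi\,\hat\phi^n\,\hat\psi^{-1} = \hat\psi\,\varepsilon\,\hat\psi^{-1} = \varepsilon.$$
Equating the two expressions yields $(-1)^n = 1$, so $n$ is even.

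There is no significant obstacle here; the whole argument reduces to the two-line manipulations above once Formulas (\ref{E:2}) and (\ref{E:3}) are in hand. The only point requiring a moment of care is that the order of the lift $\hat\phi$ need not equal the order of $\phi$ (it can be $n$ or $2n$ depending on the sign $\varepsilon$), which is why the evenness statement must be formulated in terms of $\phi^n = 1$ rather than $\hat\phi^n = 1$ and why the centrality of $\varepsilon$ is used in the second computation.
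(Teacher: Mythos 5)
Your proof is correct and follows essentially the same route as the paper's: both conclusions flow from the fact that $\hat\phi$ and $-\hat\phi$ are conjugate via $\hat\psi$, combined with Formulas~(\ref{E:2}) and~(\ref{E:3}). The only cosmetic difference is in the parity argument --- the paper argues by contradiction, observing that conjugate elements have the same order and then, assuming $\mathrm{ord}(\phi)=k$ is odd, showing $\hat\phi$ and $-\hat\phi$ would have distinct orders $k$ and $2k$; you instead directly equate two computations of $(-\hat\phi)^n$ to extract $(-1)^n=1$, which is a clean reformulation of the same idea.
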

\begin{proof}
As $\hat \psi\hat \phi\hat \psi^{-1} = -\hat \phi$, the lifts $\pm \hat \phi$ have the same order. 
On the contrary, suppose that the order $k$ of $\phi$ is odd. 
Then $\hat \phi^k = \pm 1$. 
If $\hat \phi^k = 1$, then $\hat \phi$ has order $k$ and $-\hat \phi$ has order $2k$. 
If $\hat \phi^k = -1$, then $\hat \phi$ has order $2k$ and $-\hat \phi$ has order $k$. 
Therefore, the order of $\phi$ must be even. Moreover, by Formulas \ref{E:2} and \ref{E:3}, we have
$${\rm Spin}(\hat\phi, M) = {\rm Spin}(\hat \psi\hat \phi\hat \psi^{-1}, M) = {\rm Spin}(-\hat \phi, M) = -{\rm Spin}(\hat \phi,M).$$

\vspace{-.23in}
\end{proof}

By Theorem \ref{T:6.2}(1), the set $M^\phi$ of fixed points of $\phi$ has only finitely 
many connected components, and if $M^\phi$ is nonempty, then each component is a totally geodesic, closed, 
even-dimensional, embedded submanifold of $M$. 

By the Atiyah-Singer $G$-spin theorem,
\begin{equation}\label{E:4}
{\rm Spin}(\hat \phi, M) = \sum \nu(\hat \phi, C)
\end{equation}
where $C$ varies over the connected components of the set $M^\phi$ of fixed points of $\phi$, 
and $\nu(\hat \phi, C)$ is defined in terms of characteristic classes, cf. p 174 \cite{S} or \cite{L-M}. 

\begin{lemma}\label{L:7.2} 
Let $\phi$ be an orientation-preserving isometry of a connected, spin, closed, hyperbolic $4$-manifold $M$, 
and let $\hat \phi$ be a lift of $\phi$ to a symmetry of the spin structure on $M$. 
Then $\nu(\hat \phi, C) = 0$ for each surface component $C$ of $M^\phi$. 
\end{lemma}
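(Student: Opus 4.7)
The plan is to apply the Atiyah-Singer $G$-spin fixed-point formula in codimension two and finish with a degree count. A surface component $C$ of $M^\phi$ has codimension two in the spin $4$-manifold $M$, so the differential of $\phi$ acts on the oriented normal bundle $\nu(C)$ as a rotation by some nonzero angle $\theta$. By Theorem \ref{T:6.1}(4), $\nu(C)$ is trivial as an oriented real rank-two bundle, hence trivial as a complex line bundle under the canonical complex structure, and in particular $c_1(\nu(C)) = 0$.

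Next I would write the contribution $\nu(\hat\phi, C)$ explicitly using the Atiyah-Singer $G$-spin formula (cf.\ p.\ 174 of \cite{S} or Theorem~8.35 of \cite{L-M}). For a codimension-two fixed component whose normal bundle is a complex line bundle with first Chern class $x = c_1(\nu(C))$ and normal rotation angle $\theta$, the contribution takes the shape
$$\nu(\hat\phi,C) \;=\; \int_C \hat A(TC)\cdot \frac{\epsilon}{2\sinh\bigl((x+i\theta)/2\bigr)},$$
where $\epsilon$ is a scalar determined by the chosen lift $\hat\phi$ to the spin bundle. The second factor is interpreted as a formal power series in $x$; its constant term is $\epsilon/(2i\sin(\theta/2))$ and every higher-degree term is a multiple of a positive power of $x$.

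The degree count then concludes the proof. Since $C$ is a surface and $\hat A$ is a polynomial in Pontryagin classes, which are concentrated in degrees divisible by $4$, we have $\hat A(TC) = 1$. Since $\nu(C)$ is trivial, $x = c_1(\nu(C)) = 0$, so the power series expansion of $\epsilon/\bigl(2\sinh((x+i\theta)/2)\bigr)$ collapses to its degree-zero constant. Thus the integrand is a $0$-form on $C$ and has no $2$-form part, so the integral, which extracts the top-degree component, vanishes and $\nu(\hat\phi, C) = 0$.

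The main obstacle I anticipate is pinning down the precise form (in particular the scalar $\epsilon$) of the codimension-two local contribution from \cite{S} or \cite{L-M}, and verifying that its higher-degree terms are genuinely polynomial in $c_1(\nu(C))$ with no constant contribution in positive degree. Neither of these difficulties affects the degree-counting argument itself, which is driven entirely by the triviality of $\nu(C)$ supplied by Theorem \ref{T:6.1}(4) together with the vanishing of positive-degree Pontryagin classes on a surface.
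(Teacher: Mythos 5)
Your degree-counting strategy is exactly the one the paper uses, but there is a gap in how you justify that the codimension-two factor collapses to degree zero. You invoke Theorem~\ref{T:6.1}(4) to conclude that $\nu(C)$ is a trivial complex line bundle, but that theorem's hypotheses require the submanifold $C$ itself to be orientable, which you have not verified. More fundamentally, your formula $\epsilon/\bigl(2\sinh((x+i\theta)/2)\bigr)$ presupposes that $d\phi$ acts on $\nu(C)$ as a rotation through an angle $\theta\neq 0,\pi$, so that $\nu(C)$ acquires a $\phi$-invariant complex structure with $x=c_1$. When ${\rm ord}(\phi)=2$ the normal action is $-1$, there is no induced complex structure or preferred orientation on $\nu(C)$, and the $G$-spin local term for the $(-1)$-eigenbundle is a different expression built from Pontryagin classes of $N$, not Chern classes. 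This case is not vacuous for the lemma as stated: in the Davis manifold the involutions $\sigma$ and the element of type ${\bf 4}\times{\bf 4}$ both have order $2$ with surface fixed sets.

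The paper closes the gap by splitting on ${\rm ord}(\phi)$. If ${\rm ord}(\phi)=2$, the normal contribution $\mathcal F(N)$ is a polynomial in the Pontryagin classes of $N$, which live in $H^{4j}(C)$ and therefore vanish on a surface; no orientability of $C$ is needed. If ${\rm ord}(\phi)>2$, Theorem~\ref{T:6.2}(5) shows the codimension-two component $C$ is orientable, and only then does Theorem~\ref{T:6.1}(4) apply to give $N$ trivial, after which $\mathcal F(N)$ is a constant polynomial in Chern classes. In both cases $\hat{\mathcal A}(C)\mathcal F(N)$ has no component in $H^2(C)$, so pairing with $[C]$ gives zero. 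Your answer and final degree count are correct; what is missing is the case distinction and the orientability check that make the invocation of Theorem~\ref{T:6.1}(4) legitimate.
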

\begin{proof}
Let $C$ be a surface component of $M^\phi$. By the $G$-spin theorem, p 174 of \cite{S}, 
$$\nu(\hat \phi, C)= \pm (\hat{\mathcal A}(C)\mathcal F(N))[C]$$
where $\hat{\mathcal A}(C)$ is a polynomial in the Pontryagin classes of the tangent bundle of $C$, 
which all live in $H^{4j}(C)$, and so $\hat{\mathcal A}(C)$ is a constant polynomial.  

If ${\rm ord}(\phi) = 2$, then $\mathcal F(N)$ is a polynomial in the Pontryagin classes of the normal bundle $N$ of $C$ in $M$, which all live in $H^{4j}(C)$, 
and so  $\mathcal F(N)$ is a constant polynomial. 
If ${\rm ord}(\phi) > 2$, then $C$ is orientable by Theorem \ref{T:6.2}(5), and so $N$ has a complex structure, 
and $\mathcal F(N)$ is a polynomial in the Chern classes of $N$, 
and so $\mathcal F(N)$ is a constant polynomial, since $N$ is trivial by Theorem \ref{T:6.1}(4). 

The total cohomology class $\hat{\mathcal A}(C)\mathcal F(N)$, 
with only one nonzero term in $H^0(C)$,
is evaluated on the fundamental class $[C]$ in $H_2(C)$, and so $\nu(\hat \phi, C)=0$. 
\end{proof}

Assume that $\phi$ has an isolated fixed point $P$. 
We now work toward deriving a new, useful, general formula for computing $\nu(\hat \phi, P)$ when $\dim M = 4$. 

Let $n = 2m$, and let $\eta:{\rm Spin}^+(n,1) \to {\rm SO}^+(n,1)$ be the double-covering epimorphism. 
The subgroup $\hat\Gamma$ of ${\rm Spin}^+(n,1)$ is mapped isomorphically onto $\Gamma$ by $\eta$. 
The map $\varpi: \hat\Gamma\backslash{\rm Spin}^+(n,1) \to \Gamma\backslash H^n$, defined by 
$\varpi(\hat\Gamma\hat g) = \Gamma(\eta(\hat g)e_{n+1})$  
is a principle ${\rm Spin}(n)$-bundle with right action induced by right multiplication in ${\rm Spin}^+(n,1)$. 
Here we identify ${\rm SO}(n)$ with the stabilizer of $e_{n+1}$ in ${\rm SO}^+(n,1)$, 
and we identity ${\rm Spin}(n)$ with $\eta^{-1}({\rm SO}(n))$. 

Let $g$ be an element of ${\rm SO}^+(n,1)$ such that $\Gamma ge_{n+1} = P$. 
Lift $g$ to an element $\hat g$ of ${\rm Spin}^+(n,1)$, which is unique up to multiplication by $-1$. 
Then $\varpi(\hat\Gamma\hat g) = P$.  

Let $f$ be an element of ${\rm SO}^+(n,1)$ such that $\phi = f_\star$, and let $\hat f$ in ${\rm Spin}^+(n,1)$ be the lift of $f$ 
such that $\hat \phi = \hat f_\star$. Then
$$\varpi(\hat\Gamma\hat f\hat g) = \Gamma(\eta(\hat f\hat g)e_{n+1}) = \Gamma (fge_{n+1}) = \phi(\Gamma ge_{n+1}) = \phi(P) = P. $$
Hence, there exists a unique element $s(\hat\phi,P;\hat\Gamma\hat g) = s$ in ${\rm Spin}(n)$ 
such that $\hat\Gamma \hat f \hat g = \hat\Gamma \hat g s$. 
Moreover $s$ does not depend on the choice of $\hat g$, 
since we can multiply the equation $\hat\Gamma \hat f \hat g = \hat\Gamma \hat g s$ by the central element $-1$ of 
${\rm Spin}^+(n,1)$. However $s$ does depend on the choice of the lift $\hat\phi$, since $\hat\Gamma(-\hat f)\hat g = \hat\Gamma\hat g(-s)$ implies that 
\begin{equation}\label{E:5}
s(-\hat\phi,P;\hat\Gamma\hat g) = -s(\hat\phi,P;\hat\Gamma\hat g).
\end{equation}

If $s_1 \in {\rm Spin}(n)$, then 
$$\hat\Gamma\hat f\hat g s_1 = \hat\Gamma\hat g s s_1 = \hat\Gamma \hat g s_1 s_1^{-1} s s_1,$$
and so the conjugacy class of $s(\hat\phi,P;\hat\Gamma\hat g)$ in ${\rm Spin}(n)$ does not depend 
on the choice of $\hat\Gamma\hat g$ in $\varpi^{-1}(P)$. 

By the $G$-Spin theorem (p 174 of \cite{S}), Formula 8.37 of \cite{A-B} and its proof are true in general 
for an isolated fixed point $P$.  
By Formulas 8.38 - 8.40 of \cite{A-B}, we have 
\begin{equation}\label{E:6}
\nu(\hat \phi,P) = \frac{{\rm tr}(\Delta_n^+(s(\hat\phi,P;\hat\Gamma\hat g))) - {\rm tr}(\Delta_n^-(s(\hat\phi,P;\hat\Gamma\hat g)))}{|{\rm det}(I-d\phi_P)|}
\end{equation}
where $\Delta^+_n$ and $\Delta^-_n$ are the positive and negative complex spin representations of ${\rm Spin}(n)$. 
Note that $\nu(\hat \phi,P)$ is a nonzero algebraic number by Formula 8.37 of \cite{A-B}.

It follows from Formulas \ref{E:5} and \ref{E:6} that 
\begin{equation}\label{E:7}
\nu(-\hat\phi,P) = -\nu(\hat\phi,P).
\end{equation}

Let $\psi$ be an orientation-preserving isometry of $M$ that lifts to a symmetry $\hat\psi$ of the spin structure on $M$. 
Then $\psi(P)$ is an isolated fixed point of $\psi\phi\psi^{-1}$. 
Let $h$ be an element of ${\rm SO}^+(n,1)$ such that $\psi = h_\star$, and let $\hat h$ in ${\rm Spin}^+(4,1)$ 
be the lift of $h$ such that $\hat\psi = \hat h_\star$. 
Then $\Gamma hge_{n+1} = \psi(P)$, and $\hat h\hat\Gamma\hat h^{-1} = \hat\Gamma$, and 
$$\hat\Gamma (\hat h\hat f \hat h^{-1})\hat h\hat g = \hat\Gamma\hat h\hat f\hat g = \hat h\hat\Gamma\hat f\hat g = \hat h\hat\Gamma\hat gs =\hat\Gamma\hat h\hat gs.$$
Therefore, we have that 
\begin{equation}\label{E:8}
s(\hat\psi\hat\phi\hat\psi^{-1},\psi(P);\hat\Gamma\hat h\hat g) = s(\hat\phi,P;\hat\Gamma\hat g).
\end{equation}
Now $d(\psi\phi\psi^{-1})_{\psi(P)} = d\psi_P d\phi_Pd\psi_P^{-1}$, and so Formulas \ref{E:6} and \ref{E:8} imply that
\begin{equation}\label{E:9}
\nu(\hat\psi\hat\phi\hat\psi^{-1},\psi(P)) = \nu(\hat\phi,P).
\end{equation}

For example, suppose $\hat\psi\hat\phi\hat\psi^{-1} = -\hat\phi$.  Then
$$\nu(\hat\phi,P) = \nu(\hat\psi\hat\phi\hat\psi^{-1},\psi(P)) =\nu(-\hat\phi,\psi(P))=-\nu(\hat\phi,\psi(P)).$$
Therefore, the sum of the $\nu$-terms of $\hat\phi$ over the isolated fixed points of $\phi$ cancel in pairs, since each such $\nu$-term is nonzero, 
and so $\phi$ has an even number of isolated fixed points. 

We next work to derive a more useful formula for $s(\hat\phi, P;\hat\Gamma\hat g)$. 
Let $x$ be an element of $H^n$ such that $\Gamma x = P$. 
Then $\Gamma x = P = \phi(P) = \Gamma fx$, and so there is a unique element $\gamma$ of $\Gamma$ 
such that $\gamma f x = x$. 
We choose $g$ in ${\rm SO}^+(n,1)$ so that $ge_{n+1} = x$.  Then $\Gamma ge_{n+1} = \Gamma x = P$ as above. 
We have that $\gamma fge_{n+1} = \gamma f x = x = ge_{n+1}$. Hence $g^{-1}\gamma f g e_{n+1} = e_{n+1}$. 
Let $\hat\gamma$ be the unique element of $\hat\Gamma$ that lifts $\gamma$. 
Then $\hat g^{-1}\hat\gamma\hat f \hat g$ is in ${\rm Spin}(n)$,  
and $\hat\Gamma\hat g(\hat g^{-1}\hat\gamma\hat f \hat g) = \hat\Gamma \hat f\hat g$.  Therefore, we have that
\begin{equation}\label{E:10}
s(\hat\phi, P;\hat\Gamma\hat g) = \hat g^{-1}\hat\gamma\hat f \hat g.
\end{equation}

The linear transformation $d\phi_P$ acts as a rotation on the tangent space $T_P(M)$ 
with nonzero rotation angles $\theta_1,\ldots,\theta_m$ module $2\pi$. 
We have that 
\begin{equation}\label{E:11}
|\det(I-d\phi_P)| = \prod_{j=1}^m(1-e^{{\bf i}\theta_j})(1-e^{-{\bf i}\theta_j}) = \prod_{j=1}^m 4 \sin^2(\theta_j/2).
\end{equation}
To find the rotation angles of $d\phi_P$, 
observe that $\gamma f$ acts as a rotation on the tangent space $T_x(H^4)$ by the same angles, 
and so the eigenvalues of $\gamma f$ are $1, e^{\pm {\bf i}\theta_1},\ldots, e^{\pm {\bf i}\theta_m}$. 
Substituting Formulas \ref{E:10} and \ref{E:11} into Formula \ref{E:6}, gives the formula:
\begin{equation}\label{E:12}
\nu(\hat \phi,P) = \frac{{\rm tr}(\Delta_n^+(\hat g^{-1}\hat\gamma\hat f \hat g)) - {\rm tr}(\Delta_n^-(\hat g^{-1}\hat\gamma\hat f \hat g))}{\prod_{j=1}^m 4\sin^2(\theta_j/2)}.
\end{equation}

By Formulas \ref{E:11} and \ref{E:12} and Formula 8.41 of \cite{A-B}, we have

\begin{equation}\label{E:13}
{\rm tr}(\Delta_n^+(\hat g^{-1}\hat\gamma\hat f \hat g)) -{\rm tr}(\Delta_n^-(\hat g^{-1}\hat\gamma\hat f \hat g)) =\pm {\bf i}^m \prod_{j=1}^m 2\sin(\theta_j/2).
\end{equation}

Squaring both sides of Formula \ref{E:13}, and substituting into Formula \ref{E:12} yields

\begin{equation}\label{E:14}
\nu(\hat \phi,P) = \frac{(-1)^m}{{\rm tr}(\Delta_n^+(\hat g^{-1}\hat\gamma\hat f \hat g)) - {\rm tr}(\Delta_n^-(\hat g^{-1}\hat\gamma\hat f \hat g))}.
\end{equation}

We represent ${\rm Spin}^+(4,1)$ by the matrix group ${\rm SU}(1,1;\mathbb{H})$ defined in $\S \ref{S:2}$.

\begin{theorem}\label{T:7.3} 
Let $\phi$ be an orientation-preserving isometry of a spin, closed, hyperbolic $4$-manifold $M = \Gamma\backslash H^4$, 
and let $\hat \phi$ be a lift of $\phi$ to a symmetry of a spin structure $\hat\Gamma\backslash {\rm SU}(1,1;\mathbb{H})$ on $M$. 
Let $f$ be an element of ${\rm SO}^+(4,1)$ such that $\phi = f_\star$, and let $\hat f$ in ${\rm SU}(1,1;\mathbb{H})$ 
be the lift of $f$ such that $\hat\phi = \hat f_\star$. 
Let $P$ be an isolated fixed point of $\phi$, and let $x$ be an element of $H^4$ such that $\Gamma x = P$. 
Let $g$ be an element of ${\rm SO}^+(4,1)$ such that $ge_5 = x$, 
and let $\hat g$ in ${\rm SU}(1,1;\mathbb{H})$ be a lift of $g$. 
Let $\gamma$ be the unique element of $\Gamma$ such that $\gamma fx = x$, 
and let $\hat\gamma$ in ${\rm SU}(1,1;\mathbb{H})$ be the unique element of $\hat\Gamma$ that lifts $\gamma$. 
Then $\hat g^{-1}\hat\gamma\hat f \hat g = {\rm diag}(p,q)$, with $p$ and $q$ unit quaternions, and  
$$\nu(\hat\phi,P) = \frac{1}{2({\rm Re}(p) -{\rm Re}(q))}.$$
\end{theorem}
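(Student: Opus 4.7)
The plan is to substitute the computation of the character difference of $\Delta_4^\pm$ on the element $\hat g^{-1}\hat\gamma\hat f\hat g$ directly into Formula \ref{E:14} with $m=2$. The whole proof reduces to two identifications: first, that $\hat g^{-1}\hat\gamma\hat f\hat g$ is diagonal in the ${\rm SU}(1,1;\mathbb H)$ model of ${\rm Spin}^+(4,1)$; second, that on such a diagonal element the half-spin characters are $2\,{\rm Re}(p)$ and $2\,{\rm Re}(q)$.

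First I would verify that $\hat g^{-1}\hat\gamma\hat f\hat g$ lies in ${\rm Spin}(4) = \eta^{-1}({\rm SO}(4))$. By construction (cf.\ the derivation of Formula \ref{E:10}), its image under $\eta$ equals $g^{-1}\gamma f g$, and this fixes $e_5$ because $\gamma fx = x$ and $ge_5 = x$. Next I would show that every element of ${\rm Spin}(4)$, viewed inside ${\rm SU}(1,1;\mathbb H)$, is diagonal with unit-quaternion entries. Writing such a lift as $A = \left(\begin{smallmatrix} a & b \\ c & d\end{smallmatrix}\right)$, the condition $\eta(A)e_5 = e_5$ combined with Table \ref{ta:1} forces $m_{55} = |a|^2+|b|^2 = 1$, which together with $|a|^2-|b|^2 = 1$ from Lemma \ref{L:2.1} gives $b=0$ and then $c=0$, with $|a|=|d|=1$. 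Hence $\hat g^{-1}\hat\gamma\hat f\hat g = {\rm diag}(p,q)$ with $p,q \in \mathbb H$ of unit norm.

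Next I would identify the two half-spin representations $\Delta_4^\pm$ explicitly on this model. Under the standard isomorphism ${\rm Spin}(4)\cong {\rm Sp}(1)\times{\rm Sp}(1)$ coming from the ${\rm SU}(1,1;\mathbb H)$ presentation, the diagonal element ${\rm diag}(p,q)$ corresponds to the pair $(p,q)$, and $\Delta_4^{\pm}$ are the two two-dimensional complex representations given by the two projections onto ${\rm Sp}(1)\cong {\rm SU}(2)$ acting on $\mathbb H \cong \mathbb C^2$ by (say) left multiplication. The trace of left multiplication by a unit quaternion $p = p_0 + p_1\mathbf{i} + p_2\mathbf{j}+p_3\mathbf{k}$ on $\mathbb C^2$ is $2p_0 = 2\,{\rm Re}(p)$, so up to swapping the roles of $\Delta^+_4$ and $\Delta^-_4$ we have
\begin{equation*}
{\rm tr}(\Delta_4^+({\rm diag}(p,q))) - {\rm tr}(\Delta_4^-({\rm diag}(p,q))) = 2({\rm Re}(p) - {\rm Re}(q)).
\end{equation*}
Plugging into Formula \ref{E:14} with $n=4$, $m=2$, $(-1)^m = 1$ yields the claimed expression for $\nu(\hat\phi,P)$.

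The main obstacle I expect is nailing down the sign and the choice of which factor of ${\rm Spin}(4)\cong {\rm Sp}(1)\times{\rm Sp}(1)$ plays the role of $\Delta_4^+$ versus $\Delta_4^-$, since this depends on orientation conventions adopted in Section \ref{S:2} (in particular the change replacing $\rho^+$ by $\rho^-$ and $E_5$ by $-J$). The ordering can be pinned down by a single test computation with a known element, for instance using the explicit $\hat\alpha_i$ or $\hat\beta_i$ of Section \ref{S:4} and comparing with a pure-rotation angle, ensuring the final sign matches the convention built into Formula \ref{E:13}. Once that sign is fixed, the formula $\nu(\hat\phi,P) = 1/(2({\rm Re}(p)-{\rm Re}(q)))$ follows immediately.
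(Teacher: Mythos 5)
Your structural approach matches the paper's: reduce to Formula \ref{E:14}, show the conjugate lands in ${\rm Spin}(4)$ and is diagonal in the ${\rm SU}(1,1;\mathbb{H})$ model, and compute the half-spin characters as $2\,{\rm Re}(p)$ and $2\,{\rm Re}(q)$. Your derivation that the element is diagonal (via $m_{55}=1$ from Table \ref{ta:1} and $|a|^2-|b|^2=1$ from Lemma \ref{L:2.1}) is a nice direct argument; the paper instead cites Theorem 6.3 of \cite{RRT}. Both are fine.

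The genuine gap is exactly where you flag uncertainty: you leave the identification of which ${\rm Sp}(1)$ factor is $\Delta_4^+$ and which is $\Delta_4^-$ to a ``single test computation,'' but this is the substantive content of the theorem, since it decides whether the answer is $1/(2({\rm Re}(p)-{\rm Re}(q)))$ or its negative. The proposed test is also not viable as described: Formula \ref{E:13} carries an undetermined $\pm$ that depends on the lift, and $\nu(\hat\phi,P)$ has no independent computation available to calibrate against, so comparing a rotation angle with Formula \ref{E:13} cannot break the symmetry between the two factors. (That this sign is genuinely delicate is underscored by the Corrections section at the end of the paper, which records that a sign of precisely this kind was mishandled in \cite{RRT}.) The paper instead pins the sign down by unwinding definitions: with $\omega=e_1\cdots e_4$, the chirality operator is $C={\bf i}^2\Delta_4(\omega)=\psi(\rho^+(e_5))=\Psi_2(J)={\rm diag}(1,1,-1,-1)$, so $W^+={\rm Span}\{e_1,e_2\}$ is the block on which $\Psi_1(p)$ acts and $W^-={\rm Span}\{e_3,e_4\}$ is the block for $\Psi_1(q)$. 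That explicit computation is what your proof needs in place of the test-computation remark; without it, the argument only establishes $\nu(\hat\phi,P)=\pm 1/(2({\rm Re}(p)-{\rm Re}(q)))$.
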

\begin{proof}
In \S 6 of \cite{RRT}, we define an isomorphism $\psi:{\mathbb C}\ell(4) \to {\mathbb C}(4)$ of complex algebras. 
The complex spin representation $\Delta_4:{\rm Spin}(4) \to {\mathbb C}(4)$ is the restriction of $\psi$. 
Let $\omega = e_1\cdots e_4$ in the Clifford algebra ${\mathbb C}\ell(4)$. 
In \S 4 of \cite{RRT}, we define a matrix $C$ in ${\mathbb C}(4)$ by $C = {\bf i}^2\Delta_4(\omega)$. Then $C^2 = I$. 
Let $W^+$ and $W^-$ be the $+1$ and $-1$ eigenspaces of $C$. 
Then $\Delta_4^+$ and $\Delta_4^-$ are the complex representations of ${\rm Spin}(4)$ that are obtained by restricting 
the action of ${\rm Spin}(4)$ on $\mathbb C^4$ via $\Delta_4$ to $W^+$ and $W^-$ respectively. 

In \S 4 of \cite{RRT}, we define a retraction $\rho^+: {\mathbb C}\ell(4,1)\to {\mathbb C}\ell(4)$ of complex Clifford algebras 
by $\rho^+(e_j) = e_j$ for $j =1,\ldots, 4$ and $\rho^+(e_5) = {\bf i}^2\omega$. 
The complex spin representation $\Delta_{4,1}:{\rm Spin}^+(4,1) \to {\mathbb C}(4)$ is the restriction of $\psi \rho^+$,  
and extends $\Delta_4$. 
That $\hat g^{-1}\hat\gamma\hat f \hat g = {\rm diag}(p,q)$, with $p$ and $q$ unit quaternions, follows from Theorem 6.3 of \cite{RRT}.

From the discussion in \S 6 of \cite{RRT}, we have that
$$C = {\bf i}^2\psi(\omega) = \psi({\bf i}^2\omega)=\psi(\rho^+(e_5))=\Psi_2(J) = {\rm diag}(1,1,-1,-1).$$
Therefore $W^+ ={\rm Span}\{e_1,e_2\}$ and $W^- ={\rm Span}\{e_3,e_4\}$.

Every quaternion can be written in the form $a+b{\bf j}$ for unique $a, b$ in $\mathbb C$. 
Define monomorphisms $\Psi_1: \mathbb H \to \mathbb C(2)$ and $\Psi_2: \mathbb H(2) \to \mathbb C(4)$ 
of real algebras by
$$\Psi_1(a+b{\bf j}) =\left(\begin{array}{cc}a&b\\-\overline{b}&\overline{a}\end{array}\right)\ \ \text{and}\ \ 
\Psi_2\left(\begin{array}{cc}\alpha&\beta\\\gamma&\delta\end{array}\right) = \left(\begin{array}{cc}\Psi_1(\alpha)&\Psi_1(\beta)\\ \Psi_1(\gamma)&\Psi_1(\delta)\end{array}\right).$$
By Lemma 6.1 of \cite{RRT}, we may replace $\Delta_{4,1}$ by $\Psi_2|_{{\rm SU}(1,1;\mathbb{H})}$. Then 
$${\rm tr}(\Delta_4^+({\rm diag}(p,q))) = {\rm tr}(\Psi_1(p))= 2{\rm Re}(p),$$
$${\rm tr}(\Delta_4^-({\rm diag}(p,q))) = {\rm tr}(\Psi_1(q)) = 2{\rm Re}(q).$$
The result now follows from Formula \ref{E:14}.
\end{proof}

We next work toward finding a more computationally efficient formula for $\nu(\hat\phi, P)$. 

\begin{lemma}\label{L:7.4} 
If $A \in {\rm SU}(1,1;\mathbb H)$ with row vectors $(a,b)$ and $(c,d)$, then $|b| = |c|$. 
\end{lemma}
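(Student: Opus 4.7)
The plan is to observe that this is already contained in Lemma \ref{L:2.1}, which records the four identities $|a|=|d|$, $|b|=|c|$, $\overline b a = \overline d c$, and $c\overline a = d\overline b$ for any $A\in{\rm SU}(1,1;\mathbb H)$. So the shortest proof is simply to cite Lemma \ref{L:2.1}. The statement is presumably isolated here because only the single identity $|b|=|c|$ is needed in the spin-number computations that follow.

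If one wants a self-contained argument, the approach is to exploit the defining relation $A^*JA=J$ (with $J={\rm diag}(1,-1)$) on \emph{both} sides. First I would read off the $(1,1)$-entry of $A^*JA=J$, which gives
$$|a|^2-|c|^2=1.$$
Then, using that $J^2=I$, I would note that $A^*JA=J$ forces $A$ to be invertible with $A^{-1}=JA^*J$, and hence $AJA^*=J$ as well. The $(1,1)$-entry of this second identity yields
$$|a|^2-|b|^2=1.$$
Subtracting the two relations gives $|b|^2=|c|^2$, so $|b|=|c|$.

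There is no real obstacle: the verification is a couple of lines of quaternionic matrix multiplication, and the only subtlety to remember is that quaternionic multiplication is noncommutative, so one must compute $A^*JA$ and $AJA^*$ directly rather than appealing to transpose identities from the commutative case. Since the diagonal entries we need are each the modulus squared of a quaternion, noncommutativity is harmless there, and the argument goes through cleanly.
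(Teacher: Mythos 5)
Your proposal is correct, and the self-contained argument is essentially the paper's own proof in lightly different clothing. The paper's Lemma \ref{L:7.4} also begins from $A^*JA=J$, derives $A^{-1}=JA^*J$, and then equates the $(1,1)$-entries of $AA^{-1}$ and $A^{-1}A$ (both equal to $1$) to get $|a|^2-|b|^2=|a|^2-|c|^2$; your route via the pair of identities $A^*JA=J$ and $AJA^*=J$ produces exactly the same two scalar equations. You are also right that the conclusion is already recorded in Lemma \ref{L:2.1}, so a one-line citation would suffice; the paper evidently restates it locally for convenience in \S\ref{S:7}.
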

\begin{proof}
As $A^*JA = J$, we have $A^{-1}= JA^*J$ with row vectors $(\overline a,-\overline c)$ and $(-\overline b, \overline d)$. 
We have that
$$|a|^2-|b|^2  = (AA^{-1})_{11} = (A^{-1}A)_{11} = |a|^2-|c|^2.$$

\vspace{-.2in}
\end{proof}

Our new general formula for computing $\nu(\hat\phi, P)$ when $\dim M = 4$ follows:  

\begin{theorem}\label{T:7.5} 
Let $\phi$ be an orientation-preserving isometry of a spin,  closed, hyperbolic $4$-manifold $M = \Gamma\backslash H^4$, 
and let $\hat \phi$ be a lift of $\phi$ to a symmetry of a spin structure $\hat\Gamma\backslash {\rm SU}(1,1;\mathbb{H})$ on $M$. 
Let $P$ be an isolated fixed point of $\phi$, and let $x$ be an element of $H^4$ such that $\Gamma x = P$. 
Let $\Phi$ be the unique element of ${\rm SO}^+(4,1)$ such that $\phi = \Phi_\star$ and $\Phi x = x$, 
and let $\hat \Phi$ in ${\rm SU}(1,1;\mathbb{H})$ be the lift of $\Phi$ such that $\hat\phi = \hat \Phi_\star$. 
Then
$$\nu(\hat\phi,P) = \frac{x_5}{2({\rm Re}(\hat \Phi_{11}) -{\rm Re}(\hat\Phi_{22}))}.$$
\end{theorem}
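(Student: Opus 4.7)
The plan is to deduce this directly from Theorem 7.3 by making the conjugating factor explicit. Choose any $g\in\mathrm{SO}^+(4,1)$ with $ge_5=x$ and any lift $\hat g\in\mathrm{SU}(1,1;\mathbb H)$. Take the element called $f$ in Theorem 7.3 to be $\Phi$ itself; since $\Gamma$ is torsion-free and $\Phi x=x$, the uniqueness clause of that theorem forces its $\gamma$ to be $1$, so $\hat\gamma\hat f=\hat\Phi$. Theorem 7.3 then delivers unit quaternions $p,q$ with $\hat g^{-1}\hat\Phi\hat g=\mathrm{diag}(p,q)$ and
$$\nu(\hat\phi,P)\;=\;\frac{1}{2(\mathrm{Re}(p)-\mathrm{Re}(q))}.$$
It therefore suffices to prove the identity $\mathrm{Re}(\hat\Phi_{11})-\mathrm{Re}(\hat\Phi_{22})=x_5\bigl(\mathrm{Re}(p)-\mathrm{Re}(q)\bigr)$.

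To compute the left-hand side, write the row vectors of $\hat g$ as $(a,b)$ and $(c,d)$, in the notation of Lemma 2.1. Since $\hat g^{-1}=J\hat g^*J$, its row vectors are $(\bar a,-\bar c)$ and $(-\bar b,\bar d)$. Expanding the conjugation $\hat\Phi=\hat g\,\mathrm{diag}(p,q)\,\hat g^{-1}$ directly yields
$$\hat\Phi_{11}\;=\;a p\bar a-b q\bar b,\qquad \hat\Phi_{22}\;=\;d q\bar d-c p\bar c.$$
For any quaternion $a$, the equality $a p\bar a=|a|^2(a p a^{-1})$ shows that $\mathrm{Re}(a p\bar a)=|a|^2\mathrm{Re}(p)$, because quaternionic conjugation preserves the real part; applying this to each of the four terms gives
$$\mathrm{Re}(\hat\Phi_{11})-\mathrm{Re}(\hat\Phi_{22})\;=\;(|a|^2+|c|^2)\mathrm{Re}(p)-(|b|^2+|d|^2)\mathrm{Re}(q).$$

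It remains to identify both coefficients with $x_5$. By Lemmas 2.1 and 7.4, $|a|=|d|$ and $|b|=|c|$, so $|a|^2+|c|^2=|a|^2+|b|^2=|b|^2+|d|^2$. Since $x=\eta(\hat g)e_5$ is the fifth column of $\eta(\hat g)$, its fifth coordinate is $m_{55}=|a|^2+|b|^2$ by the last row of Table 1; hence both coefficients above equal $x_5$, and the stated formula follows. The only subtlety is the simultaneous collapse of the two distinct coefficients of $\mathrm{Re}(p)$ and $\mathrm{Re}(q)$ to the common value $x_5$, which is the reason Lemma 7.4 was inserted just before this theorem; beyond that, the argument is routine bookkeeping with no genuine obstacle.
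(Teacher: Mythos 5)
Your proof is correct and follows essentially the same route as the paper's: take $f=\Phi$ so that $\gamma=1$ in Theorem~\ref{T:7.3}, expand $\hat\Phi=\hat g\,\mathrm{diag}(p,q)\,\hat g^{-1}$ entrywise, use $\mathrm{Re}(ap\bar a)=|a|^2\mathrm{Re}(p)$, and invoke Lemma~\ref{L:2.1} together with the $m_{55}$ entry of Table~\ref{ta:1} to identify both coefficients with $x_5$. The only cosmetic difference is that the paper cites Lemma~\ref{L:2.1} alone (Lemma~\ref{L:7.4} being a self-contained restatement of the $|b|=|c|$ fact), and it leaves the reduction to $\gamma=1$ implicit rather than spelling it out as you do.
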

\begin{proof}
Let $g$ be in ${\rm SO}^+(4,1)$ such that $ge_5 = x$, and let $\hat g$ in ${\rm SU}(1,1;\mathbb H)$ be a lift of $g$. 
Then $\hat g^{-1}\hat\Phi\hat g = {\rm diag}(p,q)$ with $p$ and $q$ unit quaternions by Theorem \ref{T:7.3}.
Hence $\hat\Phi = \hat g\,{\rm diag}(p,q)\hat g^{-1}$. Let $\hat g$ have row vectors $(a,b)$ and $(c,d)$. 
Then 
$$\hat \Phi = \left(\begin{array}{cc} a & b \\ c & d\end{array}\right)\left(\begin{array}{cc} p & 0 \\ 0 & q\end{array}\right) 
\left(\begin{array}{cc} \overline a & -\overline c \\ -\overline b & \overline d\end{array}\right) 
= \left(\begin{array}{cc} ap\overline a -bq\overline b& \ast \\ \ast & -cp\overline c+dq\overline d\end{array}\right).$$
Hence
\begin{eqnarray*}{\rm Re}(\hat\Phi_{11}) - {\rm Re}(\hat\Phi_{22}) & = &{\rm Re}(ap\overline a - bq\overline b)-{\rm Re}(-cp\overline c+dq\overline d) \\ 
& = & |a|^2{\rm Re}(p) - |b|^2{\rm Re}(q) + |c|^2{\rm Re}(p) - |d|^2{\rm Re}(q) \\ 
& = & (|a|^2+|c|^2){\rm Re}(p) - (|b|^2+|d|^2){\rm Re}(q).\end{eqnarray*}

By Lemma \ref{L:2.1}, we have that 
$$|a|^2+|c|^2 = |a|^2+|b|^2 =  |b|^2+|d|^2.$$
From the formula for the double-covering epimorphism $\eta: {\rm SU}(1,1;\mathbb H) \to {\rm SO}^+(4,1)$ in Table \ref{ta:1}, 
we have that $|a|^2+|b|^2 = g_{55} =x_5$. 
Therefore 
$${\rm Re}(\hat\Phi_{11}) - {\rm Re}(\hat\Phi_{22}) = x_5({\rm Re}(p) - {\rm Re}(q)).$$
The result now follows from Theorem \ref{T:7.3}.
\end{proof}

\section{Spin numbers of the Davis Hyperbolic $4$-Manifold}\label{S:8}  

In this section, we describe our computation of all the spin numbers of the fully symmetric spin structure $\hat\Gamma\backslash{\rm Spin}^+(4,1)$ of the Davis hyperbolic 4-manifold $M = \Gamma\backslash H^4$. 
The group $G$ of orientation-preserving isometries of $M$ has 34 conjugacy classes, 
with 29 conjugacy classes consisting of isometries with only isolated fixed points, 
and the remaining 5 conjugacy classes consisting of isometries with only non-isolated fixed points. 
We found all the fixed points of elements of $G$ by considering the simplicial action of $G$ on the triangulation of $M$ 
induced by the second barycentric subdivision of the regular 120-cell $\mathcal C$.

The double cover $\hat G$ of $G$ contains 54 conjugacy classes. Forty of these conjugacy classes pair off into 20 pairs consisting of a conjugacy class and its minus conjugacy class.  These 40 conjugacy classes of $\hat G$ project to 20 conjugacy classes of $G$. 
The remaining 14 conjugacy classes of $\hat G$ are equal to their minus conjugacy class. 
These 14 conjugacy classes of $\hat G$ project to 14 conjugacy classes of $G$. 

For simplicity, we regard $2I \times 2I$ as a subgroup of $\hat G$ of index 2 with $\hat\sigma_\star$ 
a representative for the other coset. 
Each conjugacy class of $2I\times 2I$ is the Cartesian product of two conjugacy classes of $2I$. 
We denoted the conjugacy classes of $2I$ by ${\bf 1, 2, 3, 4, 5A, 5B, 6, 10A, 10B}$ 
with the numerical part equal to the order of each element of the class. 
The conjugacy classes of $2I$ are determined by the real part of each element of a class listed in Table \ref{ta:2}. 
We denote the real part of each element of a conjugacy class ${\bf x}$ of $2I$ by ${\rm Re}({\bf x})$.
 
 Each conjugacy class of $\hat G$ that is not equal to its minus conjugacy class is either a conjugacy class of $2I\times 2I$ 
 or the union of two conjugacy classes of $2I\times 2I$ that are transposed by conjugating by $\hat\sigma_\star$. 
 Five of the remaining 14 conjugacy classes of $\hat G$ are of this same form. The remaining 9 conjugacy classes of $\hat G$ 
 are subsets of the coset $(2I\times 2I)\hat\sigma_\star$. Each of these conjugacy classes is the set $[{\bf 1}\times{\bf x}]$ 
 of all the conjugates of the elements of the set $({\bf 1}\times{\bf x})\hat\sigma_\star$, where ${\bf x}$ is a conjugacy class of $2I$. 
 In particular, $[{\bf 1}\times{\bf 1}]$ is the set of all the conjugates of $\hat\sigma_\star$. 
 That all the elements of the set $({\bf 1}\times{\bf x})\hat\sigma_\star$ are conjugate follows from the fact 
 that $(\alpha^{-1}(q),q)$ commutes with $\hat\sigma_\star$ for each $q$ in $2I$ by Theorem \ref{T:4.1}. 
 
 All the conjugacy classes of $\hat G$ are listed in Table \ref{ta:6} with a horizontal line separating the two types of classes. 
 The second column lists the orders of the elements in each class in the first column. 
 The third column lists the number of elements in each class. 
 The fourth column lists the number of fixed points of the action on $M$. 
 The fifth column lists the spin number of the class in the first column. 
 The last column lists the corresponding minus class. 
 The spin number of a minus class is minus the spin number of the class.  
 The order of a class in the last column above the horizontal line is the least common multiple 
 of the numerical parts of its name.

\begin{table} 
$$\begin{array}{|c|rrccc|} \hline
\text{Conjugacy Class} & \text{Ord}\!\!\! &\text{Size} & \!\!\text{\# FP}\!\! & \text{Spin \#}\!\!\! & -\,\text{Conjugacy Class} \\ \hline
{\bf 1}\times{\bf 1} & 1 & 1 &  \infty &  0 & {\bf 2}\times{\bf 2}   \\ 
{\bf 1}\times{\bf 3}+{\bf 3}\times{\bf 1}& 3 & 40 &  2 & 0 & {\bf 2}\times{\bf 6}+{\bf 6}\times{\bf 2}  \\ 
{\bf 3}\times{\bf 3}& 3 & 400&  \infty  & 0 & {\bf 6}\times{\bf 6}  \\ 
{\bf 1}\times{\bf 4}+{\bf 4}\times{\bf 1}& 4 & 60&  2 & 0 & {\bf 2}\times{\bf 4}+{\bf 4}\times{\bf 2}  \\ 
{\bf 1}\times{\bf 5A}+{\bf 5B}\times{\bf 1}& 5 & 24&  26 & 5\sqrt{5} & {\bf 2}\times{\bf 10B}+{\bf 10A}\times{\bf 2}  \\ 
{\bf 1}\times{\bf 5B}+{\bf 5A}\times{\bf 1}& 5 & 24&  26 & -5\sqrt{5} & {\bf 2}\times{\bf 10A}+{\bf 10B}\times{\bf 2}  \\ 
{\bf 5A}\times{\bf 5B} & 5 & 144&  6 & 0 & {\bf 10B}\times{\bf 10A}  \\ 
{\bf 5B}\times{\bf 5A} & 5 & 144&  6 & 0 & {\bf 10A}\times{\bf 10B}  \\ 
{\bf 5A}\times{\bf 5A}+{\bf 5B}\times{\bf 5B}& 5 & 288 &  \infty & 0 & \!\!\!{\bf 10A}\times{\bf 10A}+{\bf 10B}\times{\bf 10B} \\ 
{\bf 1}\times{\bf 6}+{\bf 6}\times{\bf 1}& 6 & 40 &  2 & 0 & {\bf 2}\times{\bf 3}+{\bf 3}\times{\bf 2} \\ 
{\bf 1}\times{\bf 10A}+{\bf 10B}\times{\bf 1}& 10 & 24 &  2 & \sqrt{5} & {\bf 2}\times{\bf 5B}+{\bf 5A}\times{\bf 2} \\ 
{\bf 1}\times{\bf 10B}+{\bf 10A}\times{\bf 1}& 10 & 24 &  2 & -\sqrt{5} & {\bf 2}\times{\bf 5A}+{\bf 5B}\times{\bf 2} \\
{\bf 5A}\times{\bf 10B}+{\bf 10A}\times{\bf 5B}& 10 & 288 &  12 & -2\sqrt{5} & {\bf 5B}\times{\bf 10A}+{\bf 10B}\times{\bf 5A} \\ 
{\bf 3}\times{\bf 4}+{\bf 4}\times{\bf 3}& 12 & 1200 &  2 & 0 & {\bf 4}\times{\bf 6}+{\bf 6}\times{\bf 4} \\ 
{\bf 3}\times{\bf 5A}+{\bf 5B}\times{\bf 3}& 15 & 480 &  2 & -\sqrt{5} & {\bf 6}\times{\bf 10B}+{\bf 10A}\times{\bf 6} \\ 
{\bf 3}\times{\bf 5B}+{\bf 5A}\times{\bf 3}& 15 & 480 &  2 & \sqrt{5} & {\bf 6}\times{\bf 10A}+{\bf 10B}\times{\bf 6} \\ 
{\bf 4}\times{\bf 5A}+{\bf 5B}\times{\bf 4}& 20 & 720 &  2 & -\sqrt{5} & {\bf 4}\times{\bf 10B}+{\bf 10A}\times{\bf 4} \\ 
{\bf 4}\times{\bf 5B}+{\bf 5A}\times{\bf 4}& 20 & 720 &  2 & \sqrt{5} & {\bf 4}\times{\bf 10A}+{\bf 10B}\times{\bf 4} \\ 
{\bf 3}\times{\bf 10A}+{\bf 10B}\times{\bf 3}& 30 & 480 &  2 & \sqrt{5} & {\bf 5A}\times{\bf 6}+{\bf 6}\times{\bf 5B} \\ 
{\bf 3}\times{\bf 10B}+{\bf 10A}\times{\bf 3}& 30 & 480 &  2 & -\sqrt{5} & {\bf 5B}\times{\bf 6}+{\bf 6}\times{\bf 5A} \\ \hline
{\bf 1}\times{\bf 2}+{\bf 2}\times{\bf 1}& 2 & 2 &  122 & 0 & {\bf 1}\times{\bf 2}+{\bf 2}\times{\bf 1} \\ \relax
[{\bf 1}\times{\bf 2}] & 2 & 120 &  10 & 0 & [{\bf 1}\times{\bf 2}]\\ \relax
[{\bf 1}\times{\bf 1}] & 4 & 120 &  \infty & 0 & [{\bf 1}\times{\bf 1}]\\ 
{\bf 4}\times{\bf 4} & 4 & 900 &  \infty & 0 & {\bf 4}\times{\bf 4}\\ 
{\bf 3}\times{\bf 6}+{\bf 6}\times{\bf 3} & 6 & 800 &  8 & 0 & {\bf 3}\times{\bf 6}+{\bf 6}\times{\bf 3}\\ \relax
[{\bf 1}\times{\bf 6}] & 6 & 2400 &  4 & 0 & [{\bf 1}\times{\bf 6}]\\ \relax
[{\bf 1}\times{\bf 4}] & 8 & 3600 &  2 & 0 & [{\bf 1}\times{\bf 4}]\\ 
{\bf 5A}\times{\bf 10A}+{\bf 10B}\times{\bf 5B} & 10 & 288 &  2 & 0 &{\bf 5A}\times{\bf 10A}+{\bf 10B}\times{\bf 5B}\\
{\bf 5B}\times{\bf 10B}+{\bf 10A}\times{\bf 5A} & 10 & 288 &  2 & 0 &{\bf 5B}\times{\bf 10B}+{\bf 10A}\times{\bf 5A} \\ \relax
[{\bf 1}\times{\bf 10A}] & 10 & 1440 &  0 & 0 &[{\bf 1}\times{\bf 10A}] \\ \relax
[{\bf 1}\times{\bf 10B}] & 10 & 1440 &  0 & 0 &[{\bf 1}\times{\bf 10B}] \\ \relax
[{\bf 1}\times{\bf 3}] & 12 & 2400 &  0 & 0 &[{\bf 1}\times{\bf 3}] \\ \relax
[{\bf 1}\times{\bf 5A}] & 20 & 1440 &  4 & 0 &[{\bf 1}\times{\bf 5A}] \\ \relax
[{\bf 1}\times{\bf 5B}] & 20 & 1440 &  4 & 0 &[{\bf 1}\times{\bf 5B}] \\ \hline
\end{array}$$
\caption{Table of the conjugacy classes of $\hat G$ and their spin numbers}\label{ta:6}
\end{table}

\begin{lemma}\label{L:8.1} 
The minus conjugacy class of a conjugacy class of $\hat G$ is the class in the same row and in the last column of {\rm Table $\ref{ta:6}$}. 
In particular, the conjugacy classes of $\hat G$ listed in {\rm Table $\ref{ta:6}$} below the horizontal line are equal to their minus conjugacy class. 
Therefore, the spin numbers of these classes are $0$ by {\rm Lemma $\ref{L:7.1}$}. 
\end{lemma}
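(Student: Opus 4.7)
The plan is to observe that $-1 = (-1,-1)$ is central in $\hat G$, so left multiplication by $-1$ permutes conjugacy classes and the minus-class operation $[\hat\phi]\mapsto [-\hat\phi]$ is well defined on the set of conjugacy classes. I would verify the entries of the last column of Table~\ref{ta:6} in two stages, corresponding to the subgroup $2I\times 2I$ and its complementary coset, and then derive the ``in particular'' clause by inspection.

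For a class contained in $2I\times 2I$, negation acts coordinatewise on pairs of unit quaternions. From the real-part row of Table~\ref{ta:2}, negation in $2I$ induces the involution of conjugacy classes
\[
\mathbf{1}\leftrightarrow\mathbf{2},\quad \mathbf{3}\leftrightarrow\mathbf{6},\quad \mathbf{4}\leftrightarrow\mathbf{4},\quad \mathbf{5A}\leftrightarrow\mathbf{10B},\quad \mathbf{5B}\leftrightarrow\mathbf{10A}.
\]
I would then read off the minus class of each row above the horizontal line (and of the five $(2I\times 2I)$-type rows below it) by applying this involution coordinatewise, remembering that certain classes of $\hat G$ are unions of two $(2I\times 2I)$-classes transposed by $\hat\sigma_\star$. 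This reduces to a uniform case check against the last column of Table~\ref{ta:6}.

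For the nine classes below the line of the form $[\mathbf{1}\times\mathbf{x}]$ contained in the coset $(2I\times 2I)\hat\sigma_\star$, I would use Theorem~\ref{T:4.1} to move factors past $\hat\sigma_\star$. Since $\alpha(\pm 1) = \pm 1$, conjugation by $\hat\sigma_\star$ swaps $(-1,1)$ and $(1,-1)$, so the one-line computation
\[
(-1,1)\,(1,q)\hat\sigma_\star\,(-1,1) \;=\; (-1,q)(1,-1)\hat\sigma_\star \;=\; (-1,-q)\hat\sigma_\star \;=\; -(1,q)\hat\sigma_\star
\]
shows that $[\mathbf{1}\times\mathbf{x}]$ coincides with its own minus class for every conjugacy class $\mathbf{x}$ of $2I$.

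Once the last column of Table~\ref{ta:6} is justified in this way, inspection confirms that every row below the horizontal line has identical first and last entries, so each such class equals its own minus class. Hence for any $\hat\phi$ in such a class there exists $\hat\psi\in \hat G$ with $\hat\psi\hat\phi\hat\psi^{-1}=-\hat\phi$, and Lemma~\ref{L:7.1} immediately gives ${\rm Spin}(\hat\phi,M)=0$. I expect the only real obstacle to be carrying out the coordinatewise verification cleanly for the five non-coset rows below the line, where one must track the transposition induced by $\hat\sigma_\star$ alongside the real-part involution; once that bookkeeping is done, the rest of the lemma is automatic.
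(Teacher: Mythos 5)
Your proposal is correct and follows essentially the same route as the paper: both use the real-part involution from Table~\ref{ta:2} to read off negation on conjugacy classes of $2I\times 2I$ coordinatewise, and both use Theorem~\ref{T:4.1} to show that each class $[\mathbf{1}\times\mathbf{x}]$ equals its own minus class (the paper conjugates by $(1,-1)$ and you by $(-1,1)$, a cosmetic difference), then invoke Lemma~\ref{L:7.1}.
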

\begin{proof}
By Theorem \ref{T:4.1}, we have that $\hat\sigma_\star (1,-1) \hat\sigma_\star^{-1} = (-1,1)$. 
Multiplying this equation on the left by $(1,-1)$ and on the right by $\hat\sigma_\star$ gives
$(1,-1)\hat\sigma_\star(1,-1) = -\hat\sigma_\star$. 
Hence, if $(p,q)$ is an element of $2I\times 2I$, then $(1,-1)(p,q)\hat\sigma_\star(1,-1) = -(p,q)\hat\sigma_\star$. 
Therefore $[{\bf 1}\times {\bf x}] = -[{\bf 1}\times {\bf x}]$ for each conjugacy class {\bf x} of $2I$. 

From Table \ref{ta:2}, we see that $-{\rm Re}({\bf 1}) = {\rm Re}({\bf 2})$, $-{\rm Re}({\bf 3}) = {\rm Re}({\bf 6})$, 
$-{\rm Re}({\bf 4}) = {\rm Re}({\bf 4})$, $-{\rm Re}({\bf 5A}) = {\rm Re}({\bf 10B})$, and $-{\rm Re}({\bf 5B}) = {\rm Re}({\bf 10A})$. 
Therefore $-{\bf 1} = {\bf 2}$, $-{\bf 3} = {\bf 6}$, 
$-{\bf 4} = {\bf 4}$, $-{\bf 5A} = {\bf 10B}$, and $-{\bf 5B} = {\bf 10A}$.
Hence, multiplying the first column of Table \ref{ta:6} by $-1 = (-1,-1)$ gives the last column of Table \ref{ta:6}. 
\end{proof}

The formula in Lemma \ref{L:8.2}  easily computes the spin numbers of the 24 conjugacy classes of $\hat G$ 
above the horizontal line in Table \ref{ta:6} that fix exactly two points of $M$.

\begin{lemma}\label{L:8.2} 
Let ${\bf C} = {\bf x}\times {\bf y}+ {\bf y'}\times {\bf x'}$ be a conjugacy class of $\hat G$ contained in $2I \times 2I$ 
that fixes exactly two points of $M$. 
Then 
$${\rm Spin}({\bf C},M) = \frac{1}{2({\rm Re}({\bf x})-{\rm Re}({\bf y}))}+\frac{1}{2({\rm Re}({\bf y'})-{\rm Re}({\bf x'}))}.$$
\end{lemma}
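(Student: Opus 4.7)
The plan is to apply the Atiyah-Singer $G$-spin theorem (Formula \ref{E:4}) to a representative $\hat\phi$ of $\mathbf{C}$ and evaluate each local contribution via Theorem \ref{T:7.3}. By Lemma \ref{L:7.2} any surface component of $M^\phi$ contributes zero, so the hypothesis that $\phi$ has exactly two fixed points $P_1, P_2$ (necessarily isolated) gives
\[
{\rm Spin}(\hat\phi, M) \;=\; \nu(\hat\phi, P_1) + \nu(\hat\phi, P_2).
\]
At each $P_i$, Theorem \ref{T:7.3} produces unit quaternions $p_i, q_i$ with $\hat g_i^{-1}\hat\gamma_i\hat f\hat g_i = {\rm diag}(p_i, q_i)$ and $\nu(\hat\phi, P_i) = 1/(2({\rm Re}(p_i) - {\rm Re}(q_i)))$. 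Since $\hat\phi$ has finite order, so does $\hat\gamma_i\hat f$, forcing $p_i, q_i \in 2I$.

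The principal task is to pin down the $2I\times 2I$-conjugacy class of the pair $(p_i, q_i)$ at each fixed point. Since ${\rm diag}(p_i, q_i)$ is ${\rm SU}(1,1;\mathbb{H})$-conjugate to $\hat\gamma_i\hat f$, which is a lift of the representative $\hat\phi$ modulo $\hat\Gamma$, the pair $(p_i, q_i) \in 2I\times 2I \subset \hat G$ lies in the $\hat G$-conjugacy class $\mathbf{C} = (\mathbf{x}\times\mathbf{y}) \cup (\mathbf{y'}\times\mathbf{x'})$, hence in one of the two $2I\times 2I$-components. The crucial and most delicate step is to verify that the two fixed points realize the two components separately, so that after relabeling $(p_1, q_1) \in \mathbf{x}\times\mathbf{y}$ and $(p_2, q_2) \in \mathbf{y'}\times\mathbf{x'}$, rather than both pairs landing in the same component (which would yield twice the first summand).

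To settle this, I would invoke Theorem \ref{T:4.1}: conjugation by $\hat\sigma_\star$ sends $(p,q) \mapsto (\alpha^{-1}(q), \alpha(p))$ and therefore interchanges the $2I\times 2I$-components $\mathbf{x}\times\mathbf{y}$ and $\mathbf{y'}\times\mathbf{x'}$ (where $\mathbf{x'} = \alpha(\mathbf{x})$ and $\mathbf{y'} = \alpha^{-1}(\mathbf{y})$). The two fixed points $P_1, P_2$ and their images $\sigma_\star(P_1), \sigma_\star(P_2)$ are the fixed-point sets of $\hat\phi$ and $\hat\sigma_\star\hat\phi\hat\sigma_\star^{-1}$ respectively; combining Formula \ref{E:9} with the fact that the spin number is a $\hat G$-conjugation invariant should force the two local contributions at $P_1, P_2$ to sample the two components exactly once each. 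Granted this, summing the two $\nu$-values and using that ${\rm Re}$ depends only on the conjugacy class in $2I$ (Table \ref{ta:2}) yields the claimed formula.
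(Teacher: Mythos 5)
There is a genuine gap, and you correctly flagged the delicate step but did not close it. The missing ingredient --- which the paper's proof uses immediately --- is the geometric fact that a conjugacy class of $\hat G$ contained in $2I\times 2I$ fixes exactly the two \emph{canonical} points $C=\Gamma e_5$ and $A$ of the Davis manifold, which are fixed by \emph{every} element of $2I\times 2I$ and are interchanged by $\sigma_\star$. That single observation is what makes the rest of the argument go through.

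Without it, your mechanism for showing the two fixed points ``sample the two $2I\times 2I$-components of $\mathbf{C}$ once each'' is circular: Formula \ref{E:3} (conjugation-invariance of the spin number) is itself an immediate corollary of Formula \ref{E:9} (conjugation-invariance of the local $\nu$-terms), so comparing $\sum_i\nu(\hat\phi,P_i)$ with $\sum_i\nu(\hat\sigma_\star\hat\phi\hat\sigma_\star^{-1},\sigma_\star(P_i))$ is a tautology and gives no constraint on which component each local datum lies in. And this does need to be pinned down --- if both local terms came from $\mathbf{x}\times\mathbf{y}$, the answer would be $1/({\rm Re}(\mathbf{x})-{\rm Re}(\mathbf{y}))$, which is generally a different algebraic number from the asserted sum. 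A secondary flaw: your claim that finite order of $\hat\gamma_i\hat f$ forces $p_i,q_i\in 2I$ is incorrect (finite order only makes them roots of unity); in the paper's setting, membership in $2I$ is instead a consequence of identifying the fixed point as $C$ or $A$, whose spin isotropy is precisely $2I\times 2I$.

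The paper's proof: since $2I\times 2I$ is represented by diagonal matrices in ${\rm SU}(1,1;\mathbb H)$ and these fix $e_5$, at $C=\Gamma e_5$ one takes $\gamma=1$ and reads off directly from Theorem \ref{T:7.5} (with $x=e_5$, so $x_5=1$) that $\nu(\hat\phi,C)=1/(2({\rm Re}(\mathbf{x})-{\rm Re}(\mathbf{y})))$. For $A$, Formula \ref{E:9} gives $\nu(\hat\phi,A)=\nu(\hat\sigma_\star\hat\phi\hat\sigma_\star^{-1},\sigma_\star(A))=\nu(\hat\sigma_\star\hat\phi\hat\sigma_\star^{-1},C)$, using that $\sigma_\star(A)=C$; by Theorem \ref{T:4.1} the conjugated element lies in $\mathbf{y'}\times\mathbf{x'}$, yielding the second summand. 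The two components thus get sampled once each because $\sigma_\star$ \emph{simultaneously} swaps the two fixed points and the two components --- not by an abstract invariance argument.
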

\begin{proof}
The fixed points of ${\bf C}$ are the canonical points $C =\Gamma e_5$ and $A$ that are fixed by all the elements of $2I\times 2I$.
By Formulas \ref{E:3} and \ref{E:4}, we have that 
$${\rm Spin}({\bf C},M) = \nu({\bf x}\times{\bf y},C)+ \nu({\bf x}\times{\bf y},A).$$
By Formula \ref{E:9}, we have that 
$$\nu({\bf x}\times {\bf y},A) =\nu(\hat\sigma_\star({\bf x}\times {\bf y})\hat\sigma_\star^{-1},\sigma_\star(A)) = \nu({\bf y'}\times {\bf x'},C).$$
The result now follows Theorem \ref{T:7.5} with $x = e_5$.
\end{proof}

Let ${\bf C}$ be a conjugacy class of $\hat G$ above the horizontal line in Table \ref{ta:6} with only isolated fixed points and more than 2 fixed points. 
Then the number of fixed points of ${\bf C}$ is either 26, 6, or 12. 

1) Suppose that ${\bf C}$ fixes exactly 26 points.  Then the fixed points are the canonical points $C$ and $A$ and 24 cycles of ridge centers 
of ${\mathcal C}$. 
The value of $\nu({\bf C},C)+ \nu({\bf C},A)$ can be computed using the right-hand side of the formula in Lemma \ref{L:8.2}.

Let $\hat\phi$ be an element of ${\bf C}$.  
Represent $\phi$ by an element $f$ of ${\rm Sym}_0(\mathcal C)$ and a cycle $P$ of ridge centers fixed by 
$\phi = f_\star$ by a ridge center $x$. 
Then $fx$ is an other ridge center in the cycle $P$.  Ridge center cycles consist of five points, and so $P = \{c_1,c_2,c_3,c_4,c_5\}$ 
and there is a sequence of side-pairing transformations $g_{i_1}, \ldots, g_{i_5}$ in $\Gamma$ such that $g_{i_j}(c_j) = c_{j+1}$ with $j$ taken modulo 5,  
cf. Table 2 of \cite{ratcliffe-tschantz:davis}. 
This means that there is a product $\gamma$ of at most four of $g_{i_1}, \ldots, g_{i_5}$ so that $\gamma fx = x$. 
Let $\Phi = \gamma f$. 
Then $\Phi_\star = \phi$. 
Let $\hat\gamma$ be the unique element of $\hat\Gamma$ that lifts $\gamma$, and let $\hat f$ be the unique element of ${\rm SU}(1,1;\mathbb H)$ that lifts $f$ and $\hat\gamma\hat f = \hat\phi$. Then $\hat\Phi = \hat\gamma\hat f$ is the lift of $\Phi$ 
that we use to compute $\nu(\hat\phi, P)$ via Theorem \ref{T:7.5}, cf. \S 10.6 of \cite{RRT}.

2) Suppose that ${\bf C}$ fixes exactly 6 points. 
Then the fixed points are the canonical points $C$ and $A$ and 4 cycles of ridge centers of ${\mathcal C}$. 
The value of $\nu({\bf C},C)+ \nu({\bf C},A)$ can be computed using the right-hand side of the formula in Lemma \ref{L:8.2} with ${\bf y'}\times{\bf x'} = {\bf x}\times{\bf y}$. The $\nu$-values of the ridge cycles are computed as in Case 1. 

3) Suppose that ${\bf C}$ fixes exactly 12 points. 
Then the fixed points are the canonical points $C$ and $A$, and 5 cycles of edge centers of ${\mathcal C}$, and 5 cycles of side centers of ${\mathcal C}$. 
The value of $\nu({\bf C},C)+ \nu({\bf C},A)$ can be computed using the right-hand side of the formula in Lemma \ref{L:8.2}. 
To compute the $\nu$-values of the remaining fixed points, we proceed as in Case 1. 
For a cycle of side centers, the value of $\gamma$ is the side-pairing transformation that maps the side-center $fx$ back to the side-center $x$. 

Each cycle of edge centers consists of 20 points that form the vertices of a regular dodecahedron $P$ in $H^4$.  
Each edge of $P$ is translated to the opposite edge of $P$ by a side-pairing transformation in $\Gamma$. 
The graph whose vertices are the vertices of $P$ and whose edges join vertices that are images of each other 
by a side-pairing transformation is the 1-skeleton of a great stellated dodecaheron $P'$. 
By composing the side-pairing transformations corresponding to an edge path in $P'$ from the vertex $fx$ of $P'$ to the vertex $x$ of $P'$
gives the element $\gamma$ of $\Gamma$ such that $\gamma fx = x$. 

There are only two nontrivial conjugacy classes of $G$ with non-isolated fixed points corresponding to a conjugacy class of $\hat G$ 
above the horizontal line in Table \ref{ta:6} namely, the classes corresponding to ${\bf 3}\times{\bf 3}$ and ${\bf 5A}\times{\bf 5A}+{\bf 5B}\times{\bf 5B}$. 

1) The conjugacy class of $G$ corresponding to ${\bf 3}\times{\bf 3}$ is represented by the isometry $\phi$ of $M$ of order 3 
represented by the permutation $5\times 5$ matrix $\Phi$ corresponding to the 3-cycle $(2,3,4)$.  
The 3-dimensional vector subspace $V$ of $\mathbb R^{4,1}$, 
consisting of the vectors fixed by $\Phi$, is spanned by $e_1, e_5$ and $e_2+e_3+e_4$. 
The subspace $V$ slices through the center of the 120-cell $\mathcal C$ in a dodecagon $P$ having sides alternately between opposite vertices in sides of $\mathcal C$, and edges of $\mathcal C$.  The polygon $P$ is drawn in Figure \ref{F:1}. 
The fixed set $M^\phi$ is a connected, totally geodesic, embedded,  orientable, closed surface of genus 3 
obtained by identifying opposite pairs of sides of $P$ as in Figure \ref{F:1}. 
Hence ${\rm Spin}(\hat\phi, M)= 0$ by Lemma \ref{L:7.2}.

\begin{figure}[t]
\centering
{\includegraphics[width=2.5in]{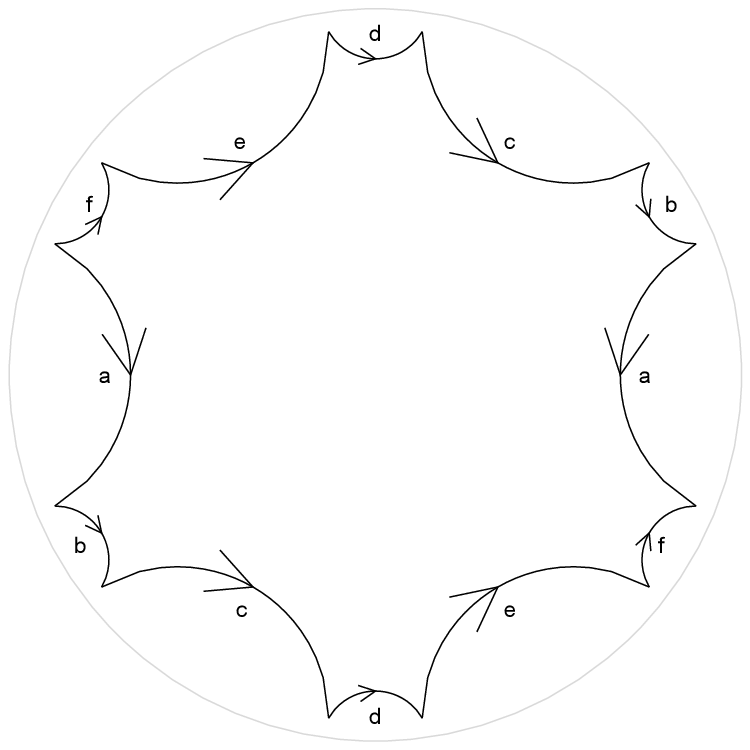}}
\caption{The gluing pattern for $M^\phi$ with $\hat\phi$ in ${\bf 3}\times{\bf 3}$}\label{F:1}
\end{figure}

2) The conjugacy class of $G$ corresponding to ${\bf 5A}\times{\bf 5A}+{\bf 5B}\times{\bf 5B}$ is represented by the isometry $\phi$ of $M$ of order 5 
represented by matrix 
$$\Phi = \frac{1}{2}\left(\begin{array}{ccccc}
2 & 0 & 0 & 0 & 0 \\
0 & 1 & \tau & 1-\tau & 0 \\
0 & \tau & 1-\tau & 1 & 0 \\
0 & \tau - 1 & -1 & -\tau & 0 \\
0 & 0 & 0 & 0 & 2 
\end{array}\right).$$
The 3-dimensional vector subspace $V$ of $\mathbb R^{4,1}$, consisting of the vectors fixed by $\Phi$, is spanned by $e_1, e_5$ and $\tau e_2+e_3$. 
The subspace $V$ 
slices through the center of the 120-cell $\mathcal C$ in a regular decagon $P$, with angles $2\pi/5$, 
having sides between the centers of opposite ridges in sides of the 120-cell $\mathcal C$. 

The fixed set $M^\phi$ has two connected components. 
The first component $M^\phi_1$ is obtained by identifying opposite pairs of sides of $P$ as in Figure \ref{F:2} to give 
a totally geodesic, embedded, orientable, closed surface of genus 2. 

There are also two regular pentagons with angles $\pi/5$ in $M$ that are point-wise fixed by $\phi$. 
Each of these pentagons corresponds to a cycle of ridges of $\mathcal C$ that is cyclically permuted by $\Phi$. 
These two pentagons glue up, according to the pattern in Figure \ref{F:2}, to give the second component $M_2^\phi$. 

The surfaces $M^\phi_1$ and $M^\phi_2$ are isometric, since their gluing patterns are dual to each other. 
The 10 vertices of the two pentagons glue up to the canonical point $A$, so this gives the same 
surface as gluing a regular decagon, with angles $2\pi/5$, centered at $A$ instead of $C$. 
We have that ${\rm Spin}(\hat\phi, M)= 0$ by Lemma \ref{L:7.2}.

\begin{figure}[t]
\centering
{\includegraphics[width=5in]{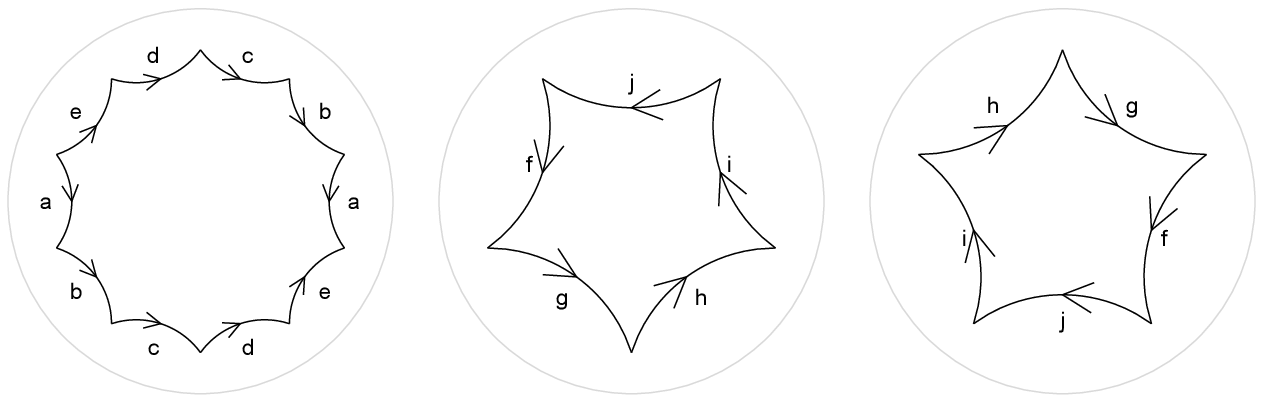}}
\caption{The gluing pattern for $M^\phi$ with $\hat\phi$ in ${\bf 5A}\times{\bf 5A}+{\bf 5B}\times{\bf 5B}$}\label{F:2}
\end{figure}

We end this section by describing the fixed sets of the involutions of $M$. 

1) The conjugacy class of $G$ corresponding to ${\bf 1}\times{\bf 2}+{\bf 2}\times{\bf 1}$ is represented by the antipodal map $\alpha$ 
of $M$ of order 2 represented by ${\diag}(-1,-1,-1,-1,1)$. 
The fixed points of $\alpha$ are $A$ and $C$, the 60 edge-center cycles and the 60 side-center cycles. 

2) The conjugacy class of $G$ corresponding to $[{\bf 1}\times{\bf 2}]$ is represented by the isometry $\alpha\sigma$ of $M$ of order 2. 
The isometry $\alpha\sigma$ has 10 fixed points, each of which is the midpoint 
of a line segment joining the canonical points $A$ and $C$

3) The conjugacy class of $G$ corresponding to $[{\bf 1}\times{\bf 1}]$ is represented by the inside-out isometry $\sigma$ of $M$ of order 2.
The fixed set of $\sigma$ is a connected, totally geodesic, embedded, orientable, closed surface of genus 4  
that is tessellated by 30 regular quadrilaterals with angles $2\pi/5$. 

4) The conjugacy class of $G$ corresponding to ${\bf 4}\times{\bf 4}$ is represented by the isometry $\phi$ of $M$ of order 2 
represented by ${\rm diag}(1,1,-1,-1,1)$. 
The fixed set of $\phi$ is a connected, totally geodesic, embedded,  orientable, closed surface of genus 4. 

\section{The $\hat G$-Index of the Davis Hyperbolic 4-Manifold}\label{S:9}  

In this section, we determine the $\hat G$-index of the fully symmetric spin structure of the Davis Hyperbolic 4-manifold. 

\begin{theorem}\label{T:9.1} 
Let $\hat G$ be the group of symmetries of the fully symmetric spin structure of the Davis hyperbolic 4-manifold $M$, 
and let $\rho^{\pm}: \hat G \to {\rm GL}( \mathcal H^{\pm})$ be the representations of $\hat G$ induced by the action of $\hat G$ 
on the complex vector spaces $\mathcal H^{\pm}$ of positive (negative) spinors of $M$. 
Then 
\begin{enumerate}
\item ${\rm Spin}(\hat G, M)=({\bf 2'}\otimes{\bf 3'})\oplus({\bf 3}\otimes{\bf 2})- ({\bf 2}\otimes{\bf 3})\oplus({\bf 3'}\otimes{\bf 2'})$ in $R(\hat G)$.
\item There is a spinorial representation $\rho$ of $\hat G$ such that 
$$\rho^+ \cong ({\bf 2'}\otimes{\bf 3'})\oplus({\bf 3}\otimes{\bf 2})\oplus \rho\ \ \text{and}\ \ \rho^-\cong ({\bf 2}\otimes{\bf 3})\oplus({\bf 3'}\otimes{\bf 2'})\oplus \rho.$$
\item The dimension of $\mathcal H = \mathcal H^+\oplus\mathcal H^-$ is at least $24$, and is divisible by $8$. 
\end{enumerate}
\end{theorem}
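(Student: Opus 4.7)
The plan is to deduce all three parts from a single character identity. By definition, the $\hat G$-index $\rho^+ - \rho^-$ is the virtual character on $\hat G$ sending $\hat\phi$ to ${\rm Spin}(\hat\phi,M)$, which has already been tabulated in Table~\ref{ta:6}. So part~(1) reduces to showing that the virtual character of $\rho_1 - \rho_2$ equals the spin-number function, where $\rho_1 = ({\bf 2'}\otimes{\bf 3'})\oplus({\bf 3}\otimes{\bf 2})$ and $\rho_2 = ({\bf 2}\otimes{\bf 3})\oplus({\bf 3'}\otimes{\bf 2'})$ are the two $12$-dimensional spinorial irreducibles from Table~\ref{ta:4}. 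Since these are induced from the index-$2$ subgroup $2I\times 2I$, their characters vanish on the coset $(2I\times 2I)\hat\sigma_\star$; together with the self-minus classes inside $2I\times 2I$ this accounts for the $14$ conjugacy classes below the horizontal line in Table~\ref{ta:6}, whose spin numbers are all zero by Lemma~\ref{L:8.1}, so the identity is immediate there.

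For each of the remaining $20$ conjugacy classes, represented by some $(p,q)\in 2I\times 2I$, Table~\ref{ta:3} gives
$$\chi_{\rho_1}(p,q) = \chi_{\bf 2'}(p)\chi_{\bf 3'}(q)+\chi_{\bf 3}(p)\chi_{\bf 2}(q),\quad \chi_{\rho_2}(p,q) = \chi_{\bf 2}(p)\chi_{\bf 3}(q)+\chi_{\bf 3'}(p)\chi_{\bf 2'}(q),$$
and a class-by-class subtraction in $\mathbb Z[\tau]$ reproduces the tabulated spin numbers; for example on ${\bf 1}\times{\bf 5A}+{\bf 5B}\times{\bf 1}$ one computes $\chi_{\rho_1}-\chi_{\rho_2} = (5\tau-3)-(2-5\tau) = 10\tau-5 = 5\sqrt 5$, matching Table~\ref{ta:6}. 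Since a virtual representation of a finite group is determined by its character, this finishes part~(1).

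For part~(2), let $m^{\pm}(\pi)$ denote the multiplicity of the irreducible $\pi$ in $\rho^\pm$. Part~(1) forces $m^+(\pi)-m^-(\pi) = \delta_{\pi,\rho_1}-\delta_{\pi,\rho_2}$ for every irreducible $\pi$, so in particular $m^+(\rho_1)\geq 1$ and $m^-(\rho_2)\geq 1$. The representation $\rho$ of $\hat G$ with multiplicities $m^+(\rho_1)-1$ in $\rho_1$, $m^-(\rho_2)-1$ in $\rho_2$, and $m^+(\pi)=m^-(\pi)$ in every other irreducible is then genuine, satisfies $\rho^+\cong \rho_1\oplus\rho$ and $\rho^-\cong \rho_2\oplus\rho$, and is spinorial because $\rho^\pm$ are (the central element $-1\in\hat G$ acts by $-1$ on harmonic spinors). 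For part~(3), $\dim\mathcal H = 24 + 2\dim\rho \geq 24$, and by Section~\ref{S:5} every spinorial irreducible of $\hat G$ is induced from a spinorial irreducible $\pi_1\otimes\pi_2$ of $2I\times 2I$, in which exactly one factor is spinorial and hence even-dimensional (see Table~\ref{ta:3}); its induced dimension $2\dim\pi_1\dim\pi_2$ is therefore a multiple of $4$. Hence $\dim\rho\equiv 0\pmod 4$ and $\dim\mathcal H\equiv 0\pmod 8$. The principal obstacle is the bookkeeping in part~(1): twenty character values of each of $\rho_1$ and $\rho_2$ must be computed and matched against Table~\ref{ta:6}, which is routine arithmetic in $\mathbb Z[\tau]$ but tedious in bulk.
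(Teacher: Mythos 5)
Your proposal is correct and follows essentially the same route as the paper's proof: part (1) by character matching against Table~\ref{ta:6}, part (2) by cancelling common irreducible summands in $R(\hat G)$, and part (3) by dimension counting using the divisibility of spinorial irreducibles. The one small gap in your part (1) is that ``induced characters vanish on the coset'' handles only the nine $[{\bf 1}\times{\bf x}]$ classes; for the five self-minus classes contained in $2I\times 2I$ (such as ${\bf 4}\times{\bf 4}$) the vanishing of $\chi_{\rho_1}-\chi_{\rho_2}$ is not immediate from induction but follows because $\rho_1,\rho_2$ are spinorial: on a class ${\bf C}$ with $-{\bf C}={\bf C}$, $\chi_{\rho_i}(g)=\chi_{\rho_i}(-g)=-\chi_{\rho_i}(g)$ forces $\chi_{\rho_i}(g)=0$. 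With that sentence added, the verification reduces cleanly to the $20$ classes above the horizontal line, and your sample computation on ${\bf 1}\times{\bf 5A}+{\bf 5B}\times{\bf 1}$ is correct; your argument for divisibility by $4$ of spinorial irreducible dimensions is a nice direct replacement for the paper's appeal to Table~\ref{ta:4}.
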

\begin{proof}
1) ${\rm Spin}(\hat G, M) = \rho^+ - \rho^-$ and 
$({\bf 2'}\otimes{\bf 3'})\oplus({\bf 3}\otimes{\bf 2})- ({\bf 2}\otimes{\bf 3})\oplus({\bf 3'}\otimes{\bf 2'})$  
have the same character by our computation of all the spin numbers listed in Table \ref{ta:6}, and so they are equal in $R(\hat G)$. 

2) Factor $\rho^\pm$ into irreducible representations, and let $\rho$ be the direct sum of the common representations. 
Then (1) implies that 
$$\rho^+ \cong ({\bf 2'}\otimes{\bf 3'})\oplus({\bf 3}\otimes{\bf 2})\oplus \rho\ \ \text{and}\ \ \rho^- \cong
({\bf 2}\otimes{\bf 3})\oplus({\bf 3'}\otimes{\bf 2'})\oplus \rho$$
since $({\bf 2'}\otimes{\bf 3'})\oplus({\bf 3}\otimes{\bf 2})$ and $({\bf 2}\otimes{\bf 3})\oplus({\bf 3'}\otimes{\bf 2'})$ are irreducible representations of $\hat G$, and $R(\hat G)$ is a free abelian group on all the isomorphism classes of irreducible representations of $\hat G$. 
Moreover, $\rho$ is spinorial, since  $\rho^+$ is spinorial. 

3) Part (3) follows from (2), since both $({\bf 2'}\otimes{\bf 3'})\oplus({\bf 3}\otimes{\bf 2})$ and $({\bf 2}\otimes{\bf 3})\oplus({\bf 3'}\otimes{\bf 2'})$ are 12-dimensional, and the dimension of every spinorial  irreducible representation of $\hat G$ is divisible by $4$ by our classification in  Table \ref{ta:4}.
\end{proof}

\section{Spin Numbers in the 2-Dimensional Case}\label{S:10}  

Let $\mathbb C(2)$ be the algebra of complex $2\times 2$ matrices. 
If $A$ is in $\mathbb C(2)$, let $A^*$ be the conjugate transpose of $A$.  Let $J = {\rm diag}(1,-1)$, and let 
$${\rm SU}(1,1;\mathbb C) = \{A \in \mathbb C(2): A^*JA = J \ \ \text{and}\ \det A = 1\}.$$
The group ${\rm SU}(1,1;\mathbb C)$ acts on the conformal ball model of hyperbolic 2-space
$$B^2 =\{z \in \mathbb C: |z| < 1\}$$
by linear fractional transformations so that 
$$\left(\begin{array}{cc} a & b \\ c & d\end{array}\right)\cdot z = (az+b)(cz +d)^{-1}.$$

In \S 5 of our paper \cite{RRT}, we defined a double-covering epimorphism 
$$\eta: {\rm SU}(1,1;\mathbb C) \to {\rm SO}^+(2,1).$$
As in \S \ref{S:2}, we now make a small change in the definition of $\eta$ for an aesthetic reason. 
We change the definition of $E_3$ from $J$ to $-J$, and we replace $\rho^-$ by $\rho^+$ throughout \S5 of \cite{RRT}. 
The new $\eta$ is conjugation by $J$ followed by the old $\eta$. 
All the main results of \S5 and \S9 of \cite{RRT} are unaltered by this change. 
The only consequence of this change to \cite{RRT} is that the off-diagonal entries of a lift $\hat M$ in ${\rm SU}(1,1;\mathbb C)$ 
of a matrix $M$ in ${\rm SO}^+(2,1)$, with respect to $\eta$, change sign. 

With this change, $\eta$ is now compatible with stereographic projection $\zeta: B^2 \to H^2$ (cf. Formula 4.5.2 \cite{R}), that is, for all $A \in {\rm SU}(1,1;\mathbb C)$ and all $z \in B^2$, we have $\zeta(A\cdot z) = \eta(A)\zeta(z).$ 

If $z \in \mathbb{C}$, write $z= z_1 +z_2{\bf i}$ with $z_1$ and $z_2$ real numbers.  
The new definition of $\eta$ is given by 
$$\eta\left(\begin{array}{cc} a & b \\ c & d\end{array}\right) = \left(\begin{array}{ccc} 1-2a_2^2+2b_1^2 & -2a_1a_2+2b_1b_2 & 2a_1b_1-2a_2b_2 \\
2a_1a_2+2b_1b_2 & 1-2a_2^2 + 2b_2^2 & 2a_1b_2+2a_2b_1\\
2a_1b_1+2a_2b_2 & 2a_1b_2-2a_2b_1 & 1 + 2b_1^2 + 2b_2^2\end{array}\right).$$
This formula appears to be unbalanced with respect to $a,b,c,d$; however, 
this is not the case, since $c = \overline b$ and $d = \overline a$ by Lemma 5.2 of \cite{RRT}.

\begin{theorem}\label{T:10.1} 
Let $\phi$ be an orientation-preserving isometry of a spin, closed, hyperbolic $2$-manifold $M = \Gamma\backslash H^2$, 
and let $\hat \phi$ be a lift of $\phi$ to a symmetry of a spin structure $\hat\Gamma\backslash {\rm SU}(1,1;\mathbb{C})$ on $M$. 
Let $f$ be an element of ${\rm SO}^+(2,1)$ such that $\phi = f_\star$, and let $\hat f$ in ${\rm SU}(1,1;\mathbb{C})$ 
be the lift of $f$ such that $\hat\phi = \hat f_\star$. 
Let $P$ be an isolated fixed point of $\phi$, and let $x$ be an element of $H^2$ such that $\Gamma x = P$. 
Let $g$ be an element of ${\rm SO}^+(2,1)$ such that $ge_3 = x$, 
and let $\hat g$ in ${\rm SU}(1,1;\mathbb{C})$ be a lift of $g$. 
Let $\gamma$ be the unique element of $\Gamma$ such that $\gamma fx = x$, 
and let $\hat\gamma$ in ${\rm SU}(1,1;\mathbb{C})$ be the unique element of $\hat\Gamma$ that lifts $\gamma$. 
Then $\hat g^{-1}\hat\gamma\hat f \hat g = {\rm diag}(u,\overline u)$, with $u$ a unit complex number, and  
$$\nu(\hat\phi,P) = \frac{1}{2\,{\rm Im}(u){\bf i}}\,.$$
\end{theorem}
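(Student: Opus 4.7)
The plan is to follow the same strategy as the proof of Theorem \ref{T:7.3}, now specialized to dimension two via Formula \ref{E:14} with $n = 2$ and $m = 1$. The element $s := \hat g^{-1}\hat\gamma\hat f \hat g$ is the key object: since $\gamma f x = x$ and $ge_3 = x$, the element $g^{-1}\gamma f g \in {\rm SO}^+(2,1)$ fixes $e_3$, hence lies in the stabilizer ${\rm SO}(2)$, and therefore $s$ lies in the double cover $\Spin(2) = \eta^{-1}({\rm SO}(2))$.

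The first step is to identify $\Spin(2) \subset {\rm SU}(1,1;\mathbb{C})$ explicitly. Using the formula for $\eta$ stated just before the theorem, a short computation shows that $A \in {\rm SU}(1,1;\mathbb{C})$ satisfies $\eta(A)e_3 = e_3$ precisely when $A = {\rm diag}(u, \bar u)$ with $u \in \mathbb{C}$ of unit modulus; such $A$ covers the planar rotation of $\R^{2,1}$ by angle $2\arg(u)$ in the $e_1e_2$-plane. This yields the first claim, $s = {\rm diag}(u, \bar u)$ with $|u| = 1$.

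Next, I would apply Formula \ref{E:14} in the case $m = 1$ to obtain
$$\nu(\hat\phi, P) \;=\; \frac{-1}{{\rm tr}(\Delta_2^+(s)) - {\rm tr}(\Delta_2^-(s))}.$$
Evaluating this requires the positive and negative half-spin characters of $\Spin(2)$ on $s = {\rm diag}(u,\bar u)$. Using the construction of the complex spin representation $\Delta_{2,1}$ given in \S 5 of \cite{RRT} (with the sign convention adjusted as in the opening paragraphs of this section), $\Delta_{2,1}$ is realized as the inclusion ${\rm SU}(1,1;\mathbb{C}) \hookrightarrow \mathbb{C}(2)$, and the chirality involution $C = {\bf i}\Delta_2(e_1e_2)$ diagonalizes so that $W^+$ and $W^-$ are the two standard coordinate lines. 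Restricted to the diagonal subgroup $\Spin(2)$, the two half-spin characters are then $u \mapsto \bar u$ and $u \mapsto u$; substituting into the displayed formula gives ${\rm tr}(\Delta_2^+(s)) - {\rm tr}(\Delta_2^-(s)) = \bar u - u = -2\,{\rm Im}(u){\bf i}$, and hence $\nu(\hat\phi,P) = 1/(2\,{\rm Im}(u){\bf i})$.

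The main obstacle is the sign bookkeeping: verifying that the positive half-spin character is $u \mapsto \bar u$ rather than $u \mapsto u$ under the convention adopted here. This depends on the redefinition of $\eta$ described at the start of \S \ref{S:10} (changing $E_3$ from $J$ to $-J$ and swapping $\rho^\pm$ throughout \S 5 of \cite{RRT}), which precisely relabels the two half-spin lines. The analogous identification in dimension four, where one computes $C = {\rm diag}(1,1,-1,-1)$ via $\psi(\rho^+(e_5))$ in the proof of Theorem \ref{T:7.3}, provides the template; once the two half-spin characters are correctly matched to $W^+$ and $W^-$, the conclusion follows by direct substitution.
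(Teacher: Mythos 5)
Your proposal is correct and follows essentially the same route as the paper: apply Formula \ref{E:14} with $m=1$ to the diagonal element $\mathrm{diag}(u,\bar u)$ and read off the two half-spin characters, with your direct verification of $\Spin(2) = \{\mathrm{diag}(u,\bar u)\} \subset {\rm SU}(1,1;\mathbb{C})$ from the formula for $\eta$ being a minor variant of the paper's citation of Theorem 5.2 of \cite{RRT}. The one step the paper carries out that you defer is precisely the sign determination you flag as the main obstacle: the paper computes $C = \mathbf{i}\,\Delta_2(\omega) = \psi(\rho^+(e_3)) = E_3 = -J = \mathrm{diag}(-1,1)$, hence $W^+ = \mathrm{Span}\{e_2\}$ and $W^- = \mathrm{Span}\{e_1\}$, giving $\mathrm{tr}\,\Delta_2^+(\mathrm{diag}(u,\bar u)) = \bar u$ and $\mathrm{tr}\,\Delta_2^-(\mathrm{diag}(u,\bar u)) = u$; you should carry out this one short computation explicitly rather than leave it to an analogy with the $n=4$ case.
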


\begin{proof}
In \S 5 of \cite{RRT}, we define an isomorphism $\psi:{\mathbb C}\ell(2) \to {\mathbb C}(2)$ of complex algebras. 
The complex spin representation $\Delta_2:{\rm Spin}(2) \to {\mathbb C}(2)$ is the restriction of $\psi$. 
Let $\omega = e_1 e_2$ in the Clifford algebra ${\mathbb C}\ell(2)$. 
Let $W^+$ and $W^-$ be the $+1$ and $-1$ eigenspaces of the matrix $C = {\bf i}\Delta_2(\omega)$. 
Then $\Delta_2^+$ and $\Delta_2^-$ are the complex representations of ${\rm Spin}(2)$ that are obtained by restricting 
the action of ${\rm Spin}(2)$ on $\mathbb C^2$ via $\Delta_2$ to $W^+$ and $W^-$ respectively. 

In \S 4 of \cite{RRT}, we define a retraction $\rho^+: {\mathbb C}\ell(2,1)\to {\mathbb C}\ell(2)$ of complex Clifford algebras 
by $\rho^+(e_j) = e_j$ for $j =1,2$ and $\rho^+(e_3) = {\bf i}\omega$. 
The complex spin representation $\Delta_{2,1}:{\rm Spin}^+(2,1) \to {\mathbb C}(2)$ is the restriction of $\psi \rho^+$,  
and extends $\Delta_2$. 
That $\hat g^{-1}\hat\gamma\hat f \hat g = {\rm diag}(u,\overline u)$, with $u$ a unit complex number, follows from Theorem 5.2 of \cite{RRT}.

Now we have
$$C = {\bf i}\psi(\omega) = \psi({\bf i}\omega) = \psi(\rho^+(e_3) )= E_3= -J = {\rm diag}(-1,1).$$
Therefore $W^+ ={\rm Span}\{e_2\}$ and $W^- ={\rm Span}\{e_1\}$.

By Theorem 5.1 of \cite{RRT}, we may replace $\Delta_{2,1}$ by the inclusion of ${\rm SU}(1,1;\mathbb{C})$ into $\mathbb C(2)$. Then 
$${\rm tr}(\Delta_2^+({\rm diag}(u,\overline u))) = \overline u \ \ \ \text{and}\ \ \ 
{\rm tr}(\Delta_2^-({\rm diag}(u,\overline u)))  =  u.$$
By Formula \ref{E:14}, we have 
$$\nu(\hat\phi,P) = \frac{-1}{\overline u - u} = \frac{1}{2\,{\rm Im}(u){\bf i}}\,.$$

\vspace{-.25in}
\end{proof}

Our new general formula for computing $\nu(\hat\phi, P)$ when $\dim M = 2$ follows:  

\begin{theorem}\label{T:10.2} 
Let $\phi$ be an orientation-preserving isometry of a spin, closed, hyperbolic $2$-manifold $M = \Gamma\backslash H^2$, 
and let $\hat \phi$ be a lift of $\phi$ to a symmetry of a spin structure $\hat\Gamma\backslash {\rm SU}(1,1;\mathbb{C})$ on $M$. 
Let $P$ be an isolated fixed point of $\phi$, and let $x$ be an element of $H^2$ such that $\Gamma x = P$. 
Let $\Phi$ be the unique element of ${\rm SO}^+(2,1)$ such that $\phi = \Phi_\star$ and $\Phi x = x$, 
and let $\hat \Phi$ in ${\rm SU}(1,1;\mathbb{C})$ be the lift of $\Phi$ such that $\hat\phi = \hat \Phi_\star$. 
Then
$$\nu(\hat\phi,P) = \frac{x_3}{2\,{\rm Im}(\hat\Phi_{11}){\bf i}}\,.$$
\end{theorem}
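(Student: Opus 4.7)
The plan is to mirror the derivation of Theorem \ref{T:7.5} from Theorem \ref{T:7.3}, conjugating the diagonal form supplied by Theorem \ref{T:10.1} back up to $\hat\Phi$ and reading off the imaginary part of the upper-left entry. First I would choose any $g$ in ${\rm SO}^+(2,1)$ with $ge_3 = x$ and any lift $\hat g$ in ${\rm SU}(1,1;\mathbb{C})$. Because $\gamma fx = x$ for a unique $\gamma$ in $\Gamma$, the product $\Phi = \gamma f$ is the unique element of ${\rm SO}^+(2,1)$ that represents $\phi$ and fixes $x$, and $\hat\Phi = \hat\gamma\hat f$ is the lift with $\hat\Phi_\star = \hat\phi$. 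By Theorem \ref{T:10.1}, $\hat g^{-1}\hat\Phi\hat g = {\rm diag}(u,\overline u)$ for a unit complex number $u$, so
\[
\hat\Phi \;=\; \hat g\,{\rm diag}(u,\overline u)\,\hat g^{-1}.
\]

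Next I would carry out this conjugation explicitly. By Lemma 5.2 of \cite{RRT}, a matrix $\hat g$ in ${\rm SU}(1,1;\mathbb{C})$ has row vectors of the form $(a,b)$ and $(\overline b,\overline a)$, and since $\det\hat g = 1$ we have $|a|^2-|b|^2 = 1$. Using $\hat g^{-1} = J\hat g^\ast J$ to write
\[
\hat g^{-1} \;=\; \begin{pmatrix} \overline a & -b \\ -\overline b & a \end{pmatrix},
\]
a direct multiplication gives $\hat\Phi_{11} = |a|^2 u - |b|^2 \overline u$. Taking the imaginary part yields
\[
{\rm Im}(\hat\Phi_{11}) \;=\; (|a|^2 + |b|^2)\,{\rm Im}(u).
\]

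Now I would identify the coefficient $|a|^2 + |b|^2$ with $x_3$. From the new formula for $\eta$ displayed just before the statement of Theorem \ref{T:10.1}, the $(3,3)$-entry of $\eta(\hat g)$ is $1 + 2|b|^2$, and since $|a|^2 - |b|^2 = 1$ this equals $|a|^2 + |b|^2$. On the other hand, $\eta(\hat g)_{33} = g_{33} = (ge_3)_3 = x_3$. Combining these,
\[
{\rm Im}(\hat\Phi_{11}) \;=\; x_3\,{\rm Im}(u).
\]
Substituting into the conclusion $\nu(\hat\phi,P) = 1/(2\,{\rm Im}(u)\mathbf{i})$ of Theorem \ref{T:10.1} gives the desired formula, and the result is independent of the choice of $g$ and the lift $\hat g$, since $\hat\Phi$ is intrinsically defined.

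The proof is essentially a transcription of the argument for Theorem \ref{T:7.5} into the complex setting, and no step looks technically hard; the only thing requiring any care is tracking the sign conventions in the modified $\eta$ and verifying that $\hat\Phi = \hat\gamma\hat f$ is genuinely the lift of $\Phi$ matching $\hat\phi$, which follows because $\hat\gamma$ lies in $\hat\Gamma$ and therefore induces the identity symmetry on $\hat\Gamma\backslash{\rm SU}(1,1;\mathbb{C})$.
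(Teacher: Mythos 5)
Your proof is correct and follows essentially the same route as the paper: conjugate ${\rm diag}(u,\overline u)$ back to $\hat\Phi$ using the special row-vector form of $\hat g$, read off $\hat\Phi_{11} = |a|^2 u - |b|^2\overline u$, identify $|a|^2+|b|^2$ with $x_3$ via the $(3,3)$-entry of $\eta(\hat g)$, and feed the result into Theorem \ref{T:10.1}. The only cosmetic difference is that the paper computes $\hat\Phi_{11}-\hat\Phi_{22}$ before extracting the imaginary part, whereas you take ${\rm Im}(\hat\Phi_{11})$ directly; the two are equivalent since $\hat\Phi_{22}=\overline{\hat\Phi_{11}}$.
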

\begin{proof}
Let $g$ be in ${\rm SO}^+(2,1)$ such that $ge_3 = x$, and let $\hat g$ in ${\rm SU}(1,1;\mathbb C)$ be a lift of $g$. 
Then $\hat g^{-1}\hat\Phi\hat g = {\rm diag}(u,\overline u)$ with $u$ a unit complex number by Theorem \ref{T:10.1}.
Hence $\hat\Phi = \hat g\,{\rm diag}(u,\overline u)\hat g^{-1}$. Let $\hat g$ have row vectors $(a,b)$ and $(\overline b,\overline a)$. 
Then 
$$\hat \Phi = \left(\begin{array}{cc} a & b \\ \overline b & \overline a\end{array}\right)\left(\begin{array}{cc} u & 0 \\ 0 & \overline u\end{array}\right) 
\left(\begin{array}{cc} \overline a & -b \\ -\overline b & a\end{array}\right) 
= \left(\begin{array}{cc} |a|^2u -|b|^2\overline u& \ast \\ \ast & -|b|^2u+|a|^2\overline u\end{array}\right).$$
Hence
$$\hat\Phi_{11} - \hat\Phi_{22} = |a|^2u -|b|^2\overline u + |b|^2u-|a|^2\overline u = (|a|^2+|b|^2)(u-\overline u).$$

From the formula for the double-covering epimorphism $\eta: {\rm SU}(1,1;\mathbb C) \to {\rm SO}^+(2,1)$, 
we have that $|a|^2+|b|^2 = 1+2|b|^2 = g_{33} = x_3$. 
Therefore 
$$2\,{\rm Im}(\hat\Phi_{11}) = \hat\Phi_{11} - \hat\Phi_{22} = x_3(u-\overline u) = 2 x_3 {\rm Im}(u).$$
The result now follows from Theorem \ref{T:10.1}.
\end{proof}

\subsection*{Corrections}
For the case of isolated fixed points, with no eigenvalue equal to $-1$, 
the Atiyah-Hirzebruch version of the $G$-spin theorem given in Formula 8 of \cite{A-H}
differs from Shanahan's version of the $G$-spin theorem on p 174 of \cite{S} by a factor of $(-1)^\ell$, with $2\ell = n$,  
because Formula 7 in \cite{A-H}, for $z = e^{{\bf i}\theta}$, gives $(2\sinh((x_j- {\bf i}\theta)/2))^{-1}$
whereas Shanahan uses $(2\sinh((x_j+ {\bf i}\theta)/2))^{-1}$ instead, cf. p 174 of \cite{S}, 
and so the relevant degree 0 terms $\mp {\bf i}\csc(\theta/2)/2$ of the Taylor series of $(2\sinh((x_j\pm {\bf i}\theta)/2))^{-1}$ differ in sign. 

On a more basic level, the difference factor $(-1)^\ell$ is explained by comparing the definitions of $\Delta_n^\pm$ on p 483 of \cite{A-B} 
and p 48 of \cite{S}. Let $\omega = e_1\cdots e_n$. 
Using the fact that the operator $Q_1\cdots Q_\ell$ is equal to multiplication by ${\bf i}^\ell\omega$, 
we see that ${\bf i}^\ell\omega$ acts on Shanahan's representation space $\Delta_n$ by multiplication by $(-1)^\ell$.   
Therefore Shanahan's $\Delta_n^+$ is Atiyah-Bott's $\Delta_n^-$ if $\ell$ is odd or Atiyah-Bott's $\Delta_n^+$ if $\ell$ is even, 
and so we have a difference factor of $(-1)^\ell$ by Formula \ref{E:12} with $\ell =m$. 

This difference in sign causes a missing factor of $(-1)^m$ in the second and last formulas in Theorem 8.1 \cite{RRT},  
and so all the $\nu$-terms and spin numbers in \S9.1 of \cite{RRT} should change in sign. 
The spin number in \S9.2 of \cite{RRT} should not change because its sign was miscalculated. 
These corrections are consistent with the definitions of $\Delta_n^\pm$ in \cite{A-B, F, RRT}, and Theorems \ref{T:10.1} and \ref{T:10.2}.

We end with some minor corrections to our paper \cite{RRT}.
On l.\,$-6$ of p 15, $\omega$ should be ${\bf i}^m\omega$.
On l.\,$-8$ of p 20, $\mathbb C(2)$ should be $\mathbb H(2)$.
On l.\,$-8$ of p 26, [21, p. 175] should be [22, p. 175].
On l.\,19 of p 34, one of 3 should be one of 4, cf. Table \ref{ta:6}.

\subsection*{Acknowledgment}
We thank Gil Bor for motivating this paper by inquiring about the symmetry group of the spin structure on the Davis hyperbolic 4-manifold. 

We also thank the referee for carefully reading our paper, and making helpful suggestions that greatly improved the exposition in \S \ref{S:6}.


\end{document}